\documentclass[10pt]{article}

\usepackage[latin9]{inputenc}
\usepackage{latexsym, bm, amsmath, amssymb, graphics, amsthm, dsfont, enumerate, physics} %
\usepackage[a4paper,
            bindingoffset=0.2in,
            left=0.8in,
            right=0.8in,
            top=0.8in,
           bottom=0.8in,
            footskip=.25in]{geometry}
\usepackage{xcolor,placeins}
\usepackage[ruled,vlined]{algorithm2e}
\usepackage{tikz}

\usetikzlibrary{calc,trees,positioning,arrows,chains,shapes.geometric,  decorations.pathreplacing,decorations.pathmorphing,shapes,  matrix,shapes.symbols}
\usepackage{tikz-cd}
\tikzcdset{scale cd/.style={every label/.append style={scale=#1},
    cells={nodes={scale=#1}}}}

\usepackage{amsmath} 
\usepackage{amsthm} 

\usepackage{hyperref} 
\usepackage{multirow,multicol}

\usetikzlibrary{arrows.meta,
                positioning,
                shapes}
\newtheorem{theorem}{Theorem}

\newtheorem{lemma}{Lemma}

\newtheorem{Def}{Definition}

\newtheorem{Ex}{Example}

\usepackage{comment}

\usepackage{booktabs}

\usepackage{setspace}

\usepackage{forest}          
\usetikzlibrary{arrows.meta}

\usepackage{xcolor}

\usepackage{tcolorbox}

\usepackage{svg}
\usepackage{subcaption}

\usepackage{enumitem}

\begin{document}

\title{Conditional Probability Spaces and the Structure of Agreement\footnote{We thank Emiliano Catonini and Stuart Zoble for extremely valuable feedback, Emiliano Catonini for suggesting Theorem 5, and a referee for a very important observation on a previous version.  E.Y.~acknowledges financial support from National Natural Science Foundation of China (NSFC), Award No.~72403259. A.B.~acknowledges financial support from NYU Stern School of Business, NYU Shanghai, and J.P. Valles. }
}

\author
{Erya Yang \footnote{Lingnan College, Laboratory of Mezzoeconomics and Regional Industrial Coordinated Development, Shenzhen Institute of Economics, Sun Yat-sen University, China; yangery@mail.sysu.edu.cn; corresponding author}
\and
{Adam Brandenburger \footnote{Stern School of Business, Tandon School of Engineering, NYU Shanghai, New York University, New York, NY 10012, U.S.A.; adam.brandenburger@stern.nyu.edu}}
    }

\date{\today}
\maketitle

\begin{abstract}
We use the machinery of a conditional probability space (R\'enyi, 1955) to obtain an Agreement Theorem (Aumann, 1976) under general conditions.  A conditional probability space (CPS) is a family of probability measures defined relative to a family of conditioning events that satisfies concentration and a chain rule.  Using this apparatus, we derive an Agreement Theorem that dispenses with the traditional assumptions of a common prior, information partitions, positivity of measure, and knowledge operators.  Our treatment can be viewed as ``deconstructing" the classic Agreement Theorem, by showing how it can be built up from local probabilistic-epistemic ingredients.  The main technical contribution is to define an augmentation procedure for CPSs that adds into the conditioning family all (sub)events that receive probability $1$ -- thereby achieving consistency between an agent's information and subjective certainty of events.
\end{abstract}

\thispagestyle{empty}
\section{Introduction} \label{sec:1}
The appearance of unbounded measures in applications of probability theory including Bayesian statistics, statistical and quantum mechanics, and probabilistic number theory motivated R\'enyi (1955) to introduce the concept of a conditional probability space (CPS).  For a CPS, one begins with a measurable space together with a family of nonempty conditioning events and associates with each such event a probability measure interpreted as the probability conditional on that event.  If the conditioning events are finite or compact or controlled in some other way, the problem of unbounded measures can be avoided.  R\'enyi developed his theory in a series of publications; see R\'enyi (1955, 1956, 1970a; 1970b).

CPSs have also proved well suited to game-theoretic applications.  An extensive game comes with natural conditioning events, namely, a player's information sets.  A CPS specifies the player's beliefs that are compatible with being at the information set in question.  Early work in this direction was undertaken by Myerson (1986), while Battigalli and Siniscalchi (1999, 2002) developed the fundamental epistemic game theory of trees using CPSs.

In this paper, we use the CPS framework to revisit the classic Agreement Theorem due to Aumann (1976).  The theorem states that if two agents share a common prior, update beliefs according to their respective information partitions, and find that their posterior probability assessments of a shared event of interest are common knowledge, then the two assessments must coincide.  The result is a cornerstone of interactive epistemology and has been applied in areas such as equilibrium analysis (Aumann and Brandenburger, 1995; Pacuit and Roy, 2025) and market efficiency (Milgrom and Stokey, 1982; Sebenius and Geanakoplos, 1983; Gizatulina and Hellman, 2019).  

The classical formulation of the theorem relies on several structural assumptions: (i) the existence of a common prior, (ii) the use of information partitions by the agents, (iii) a positivity condition on the meet of the partitions or on partition cells, and (iv) the use of a knowledge modality defined relative to the partitions.  Over the years, the literature has explored various relaxations of these assumptions.  Bonanno and Nehring (1999) and Hellman (2013) investigate weakenings of the common-prior assumption.  Geanakoplos (1989, 2021) and Samet (1990, 2022) generalize the informational structure beyond partitions.  Nielsen (1984) and Brandenburger and Dekel (1987) study settings with null events.  Brandenburger and Dekel (1987) and Monderer and Samet (1989) analyze the weaker certainty modality, while Heifetz, Meier, and Schipper (2013) study the unawareness modality and agreement.  Tsakas (2018) employs a CPS for each agent, which, on positive-probability conditioning events, is derived from a common prior.  Bach and Perea (2013) and Bach and Cabessa (2023) use a lexicographic probability system (Blume, Brandenburger, and Dekel, 1991) as an extended common prior.

This paper shows that it is possible to provide a unified treatment of several of these issues within the CPS framework.  The CPS setting allows us to separate the structural features of probabilistic reasoning that are essential for agreement from those that are conveniences.  The two defining properties of CPSs play a central role.  The first is concentration, which means that each conditional probability assigns probability $1$ to its conditioning event, and the second is the chain rule, which relates probabilities of nested conditioning events.  Our state space is finite and we assume each agent's family of conditioning events is closed under unions and nonempty intersections and covers the space.  Notice that we do not assume closure under complementation, so we generalize beyond information partitions.

We place three assumptions on the agents' CPSs, of which two are intra-agent and one is inter-agent.  The first intra-agent assumption is certainty reflection, which says that if, at some state, an agent assigns a particular probability to some event, then the agent is certain of that assessment.  (In S4 modal logic for certainty, this assumption for the probability-$1$ case is called positive introspection.)  The second intra-agent assumption is what we call $1$-closedness.  On some of an agent's conditioning events, the agent may assign probability $1$ to -- that is, be certain of -- strict subsets of those events.  This is not ruled out in the definition of a CPS.  But it creates a conceptual gap between an agent's information and (subjective) certainty.  We call a CPS $1$-closed if all such probability-$1$ events are (already) members of the agent's conditioning family.  The inter-agent assumption is local consistency, which says that conditional on any event that lies in both Alice's and Bob's conditioning families, their conditional probabilities must be the same.  On any other event, their probabilities can differ.  This is our relaxation of the common prior.

Our main result is: \textit{Assume that Alice and Bob possess CPSs and that certainty reflection, $1$-closedness, and local consistency hold.  Fix an event $E$ of interest.  Suppose, at a state $\omega$, it is common certainty that Alice assigns conditional probability $q_A$ to $E$ and Bob assigns conditional probability $q_B$ to $E$.  Then $q_A = q_B$.}

Stated less formally, our theorem says that disagreement under common certainty cannot survive once four things are true: (i) each agent reasons consistently via conditional probabilities, (ii) agents are certain what their own beliefs are, (iii) any sub-event an agent treats as certain is incorporated into that agent's informational structure, (iv) whenever the agents are looking at the same situation, they form the same probabilities there.

A second goal of the paper is a ``deconstruction" of the Agreement Theorem into arguably more basic components.  In this vein, a notable aspect of our probabilistic-epistemic ingredients (i)-(iv) is that they have an if-then character.  This is built into the definition of a CPS, which, in place of a single probability measure, is made up of a family of probability measures that say ``If the agent is informed of a particular event, then the agent would have the following probabilities."  Certainty reflection also has an if-then character, saying ``If the agent holds particular probabilities, then the agent is certain of this."  The $1$-closedness property is an internal coherency requirement on if-then statements.  Local consistency says ``If the two agents happen to have the same information, then they hold the same probabilities."  These if-then ingredients have a local and arguably more basic flavor than does the common prior.  We do not claim that we have identified a minimal architecture of agreement (we have no definition of minimality), but we do suggest that our approach may shed some light on what seems fundamentally involved in arriving at an Agreement Theorem.

A technical device we introduce is what we call $1$-augmentation of a family of conditioning events.  If an agent assigns conditional probability $1$ to an event inside some conditioning event, then that event already behaves like information from the agent's perspective.  Our $1$-augmentation procedure therefore enlarges the conditioning family by promoting all such probability-$1$ events to conditioning events themselves.  This construction is inspired by the ``posterior completion" process in Brandenburger and Dekel (1987, Definition 2.3), which itself is analogous to the augmentation of $\sigma$-fields in probability theory (Chung, 1974, Chapter 2, Exercise 20).  Using the R\'enyi (1956) representation theorem for CPSs on conditioning families closed under finite unions, we show that a CPS can be extended to its $1$-augmented conditioning family.  Finitely many iterations of this operation stabilize are at a $1$-closed CPS.

The paper proceeds as follows.  Section \ref{sec:2} introduces the CPS framework, the definition of common certainty, and an example of common certainty of disagreement in the absence of our conditions.  Section \ref{sec:3} proves our Agreement Theorem under our conditions and gives an example showing that these conditions do not trivialize the framework.  Section \ref{sec:4} defines our $1$-augmentation procedure, and states the results on extension of CPSs and finite stabilization of $1$-augmentation.  Section \ref{sec:5} discusses the relationship between certainty and knowledge and comments on open questions, including the infinite case.  The Appendix contains the proofs of the results in Section \ref{sec:4}.

\section{Framework} \label{sec:2}
Fix a finite state space $\Omega$ with the power set $\mathcal F = 2^\Omega$ of events.

\begin{Def} \label{def:cps}
A \textbf{conditional probability space} (\textbf{CPS}) is a pair $(\mathcal G, p)$, where $\mathcal G \subseteq \mathcal F$ is a family of nonempty conditioning events, and $p$ associates with each $G \in \mathcal G$ a probability measure $p_G$ on $\Omega$, satisfying:
\begin{enumerate} [label=(\roman*)]
\item $p_G(G) = 1$ for every $G \in \cal G$;
\item $p_G(E) = p_G(F) \, p_F(E)$ for every $E \subseteq F \subseteq G$ with $E \in \cal F$ and $F, G \in \cal G$.
\end{enumerate}
\end{Def}

The CPS concept was introduced by R\'enyi (1955) for the general measurable case.  We specialize it here to the finite setting (and comment on the infinite extension later).  The first condition is concentration and the second condition is the chain rule applied to nested events with no positivity requirements.  We will also refer to $p$ alone as a CPS, when $\mathcal G$ has already been specified.

In our application, we suppose Alice has a CPS $(\mathcal G_A, p^A)$.  We take $\mathcal G_A$ to be closed under unions and nonempty intersections and to cover $\Omega$, where the latter means that, for each $\omega \in \Omega$, there is a $G \in {\cal G}_A$ with $\omega \in G$.  Likewise, Bob has a CPS $(\mathcal G_B, p^B)$ obeying the same closure and covering conditions.

Given a state $\omega \in \Omega$, let $m_A(\omega)$ be the $\mathcal G_A$-atom containing $\omega$:
\begin{equation}
m_A(\omega) = \bigcap_{\omega \in G \in \mathcal G_A} \, G.
\end{equation}
Covering implies $m_A(\omega) \not= \emptyset$ and closure under intersections implies $m_A(\omega) \in \mathcal G_A$.

\begin{Def} \label{def:certainty}
Fix an event $E$ and a state $\omega$.  Say Alice is \textbf{certain of} $E$ at $\omega$ if $p^A_{m_A(\omega)}(E) = 1$.
\end{Def}

We write the event that Alice is certain of $E$ as:
\begin{equation} \label{C-operator}
C_A(E) = \{\omega \in \Omega \, : \, p^A_{m_A(\omega)}(E) = 1\}.
\end{equation}
Define certainty for Bob and the event $C_B(E)$ analogously.  Next, fix numbers $q_A, q_B \in [0, 1]$ and define recursively:
\begin{align}
A^0 = \bigl\{ \omega \in \Omega : p_{m_A(\omega)}^A(E) = q_A \bigr\}&, \,\, 
B^0 = \bigl\{ \omega \in \Omega : p_{m_B(\omega)}^B(E) = q_B \bigr\}, \label{eq-4} \\
A^{n+1} = A^n \cap C_A(B^n)&, \,\, B^{n+1} = B^n \cap C_B(A^n), \label{eq-5}
\end{align}
for $n \ge 0$.  Let:
\begin{equation}
C^\infty = \bigcap_{n=0}^\infty A^n \cap \bigcap_{n=0}^\infty B^n.
\end{equation}

\begin{Def} \label{def:common-certainty}
Fix an event $E$ and a state $\omega$.  Say it is \textbf{common certainty at} $\omega$ that Alice assigns probability $q_A$ to $E$ and Bob assigns probability $q_B$ to $E$ if $\omega \in C^\infty$.
\end{Def}

In the special case $q_A = q_B  = 1$, we simply say that the event $E$ is common certainty at $\omega$.  We next make an intra-agent assumption and an inter-agent assumption.

\begin{Def} \label{def:reflection}
Fix an agent $i = A, B$.  We say $i$ satisfies \textbf{certainty reflection} if for every event $E$, every number $q \in [0,1]$, and every state $\omega$:
\begin{equation}
p^i_{m_i(\omega)}(E) = q  \,\,\, \text{implies} \,\,\, p^i_{m_i(\omega)}\bigl(\{\omega^\prime \in \Omega : p^i_{m_i(\omega^\prime)}(E) = q\}\bigr) = 1.
\end{equation}
\end{Def}

In words, this says that if, at some state $\omega$, agent $i$ assigns probability $q$ to an event $E$, then, at $\omega$, the agent is certain of assigning probability $q$ to $E$.  In brief, there is certainty of own belief.  As noted in the Introduction, this condition is analogous to positive introspection in S4 modal logic for certainty.  Theorem \ref{thm:reflection} later provides a characterization of certainty reflection in terms of the behavior of the $\mathcal G_i$-atoms. 

To state our inter-agent assumption, let $\mathcal G_A \wedge \mathcal G_B$ denote the meet (intersection) of the families $\mathcal G_A$ and $\mathcal G_B$, and let $m(\omega)$ be the $(\mathcal G_A \wedge \mathcal G_B)$-atom containing $\omega$.  It follows from our assumptions on $\mathcal G_A$ and $\mathcal G_B$ that $\Omega \in \mathcal G_A \wedge \mathcal G_B$ and so $m(\omega) \not= \emptyset$.

\begin{Def} \label{def:local-consistency}
Alice and Bob satisfy \textbf{local consistency} at a state $\omega \in \Omega$ if:
\begin{equation}
p^A_{m(\omega)} = p^B_{m(\omega)}.
\end{equation}
\end{Def}

Note that this is a local condition at a given state and therefore quite different from the common prior, which is a global condition over the entire state space.  Moreover, the condition is required to hold only for the atom at $\omega$ of the family of events that are common to both Alice and Bob.

The example to follow establishes that, in our more general setting, the Agreement Theorem fails without further conditions.  (We maintain a common prior in the sense that $p^A_\Omega = p^B_\Omega$ and employ information partitions, to sharpen the failure of the Agreement Theorem.  We will illustrate the full generality of our framework later.)

\begin{Ex} \label{ex:disagree-cc}
Let $\Omega = \{a, b, c, d\}$ and set:
\begin{align}
\mathcal G_A &= \big\{\{a, d\}, \{b, c\}, \Omega \big\}, \\
\mathcal G_B &= \big\{\{a, b, c\}, \{d\}, \Omega \big\}.
\end{align}
Define CPSs as follows:
\begin{align}
p^A_{\Omega} &= \delta_d, \,\, p^A_{\{a, d\}} = \delta_d, \,\, p^A_{\{b, c\}} = \delta_b, \\
p^B_{\Omega} &= \delta_d, \,\, p^B_{\{d\}} = \delta_d, \,\, p^B_{\{a, b, c\}} = \delta_c,
\end{align}
where $\delta_\omega$ is the Dirac measure on $\{\omega\}$.  These are indeed CPSs, since the only non-vacuous chain-rule restriction is:
\begin{equation}
p^A_\Omega(\{d\}) = p^A_\Omega(\{a, d\}) \, p^A_{\{a, d\}}(\{d\}),
\end{equation}
which is satisfied.  Certainty reflection is automatic because the atoms of each conditioning family form a partition.  Local consistency holds at every state since $\mathcal G_A \wedge \mathcal G_B = \{\Omega\}$, so that $m(\omega) = \Omega$ for all $\omega$, and we have $p^A_\Omega = p^B_\Omega$.

Let $E = \{b\}$ be the event of interest, $q_A = 1$, $q_B = 0$, and calculate:
\begin{align}
A^0 = \{b, c\}&, \,\,\,\, B^0 = \Omega, \\
C_A(B^0) = \Omega&, \,\,\,\, C_B(A^0) = \{a, b, c\}, \\
A^1 = A^0 \cap C_A(B^0) = \{b, c\},& \,\,\,\, B^1 = B^0 \cap C_B(A^0) = \{a, b, c\}, \\
C_A(B^1) = \{b, c\}&, \,\,\,\,\, C_B(A^1) = \{a, b, c\}, \\
A_2 = A^1 \cap C_A(B^1) = \{b, c\}&, \,\,\,\, B_2 = \{a, b, c\},
\end{align}
from which the recursion stabilizes at:
\begin{equation}
A^n = \{b, c\} \,\, \text{and} \,\, B^n = \{a, b, c\} \,\, \text{for all} \,\, n.
\end{equation}

It follows that $C^\infty = \{b, c\}$.  At state $b$ or $c$, it is common certainty that Alice assigns probability $1$ to $E$ and Bob assigns probability $0$ to $E$.  There is common certainty of disagreement. 
\end{Ex}

What drives common certainty of disagreement in this example is a conceptual gap between the agents' information and their probabilities.  Each agent's CPS encodes ``sharper" certainty judgments than their conditioning events express.  Thus, Alice is certain of $\{b\}$ when conditioning on $\{b, c\}$, and Bob is certain of $\{c\}$ when conditioning on $\{a, b, c\}$.  While Alice is informed only of the event $\{b, c\}$, her probability assessment is as if she knows the finer event $\{b\}$.  Analogously for Bob.  In the next section, we close this gap.

\section{Main Result} \label{sec:3}
From here on, we hold fixed the assumption that conditioning families are closed under unions and nonempty intersections and cover $\Omega$.  Fix such a family $\mathcal G$ and consider the family of sets:
\begin{equation} \label{eq:augment}
\mathcal E = \bigl\{E \subseteq \Omega : p_G(E) = 1 \,\, \text{for some} \,\, G \in \mathcal G \,\, \text{with} \,\, E \subseteq G\bigr\}.
\end{equation}
The family $\mathcal E$ consists of those events that are contained in members of the agent's conditioning family and to which the agent assigns (conditional) probability $1$.  Since the agent is certain of these events, it makes sense to put them on an equal footing with the conditioning events themselves.  This motivates the following definition.

\begin{Def} \label{def:1-closed}
Let $(\mathcal G, p)$ be a CPS.  We say that $(\mathcal G, p)$, or simply $\mathcal G$ when $p$ is understood, is $\mathbf{1}$\textbf{-closed} if $\mathcal E = \mathcal G$.
\end{Def}

Note that, by the concentration property of a CPS, we always have $\mathcal G \subseteq \mathcal E$.  So, the force of Definition \ref{def:1-closed} is that $\mathcal E \subseteq \mathcal G$.  This says exactly that the family $\mathcal G$ is closed in the sense that all events that should be treated as conditioning events are (already) included in it.

The concept of $1$-closedness is related to the augmentation operation of probability theory (Chung, 1974, Chapter 2, Exercise 20) that adds in events $E$ to a sub-$\sigma$-field $\mathcal G$ with $p(E \, \Delta \, G) = 0$ for some $G \in \mathcal G$.  The key differences are: (i) we work with CPSs not probability measures, (ii) we do not consider events $E$ with $p_G(E) = 0$ for some $G \in \mathcal G$, and (iii) we operate with a structure that is already assumed to contain the ``additional" events.  In Section \ref{sec:4}, we examine the actual process of augmentation in our setting.  The step to a conditional notion of augmentation was taken earlier by Brandenburger and Dekel (1987, Definition 2.3), working with proper regular conditional probabilities (Blackwell and Ryll-Nardzewski, 1963; Blackwell and Dubins, 1975).

We now work towards our Agreement Theorem via a series of lemmas.

\begin{lemma} \label{lem:certainty-knowledge}
Fix an agent $i = A, B$, and suppose $(\mathcal G_i, p^i)$ is $1$-closed.  Consider an event $E$ and a state $\omega$.  If $\omega\in E$ and $p^i_{m_i(\omega)}(E) = 1$, then $m_i(\omega) \subseteq E$.
\end{lemma}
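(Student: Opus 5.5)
The idea is to exploit $1$-closedness to turn the certainty event into a conditioning event, and then use minimality of the atom $m_i(\omega)$. Set $G = m_i(\omega)$, which lies in $\mathcal G_i$ by covering and closure under nonempty intersections. Consider the event $F = E \cap G$. Then $F \subseteq G$, and since $p^i_G(G) = 1$ by concentration and $p^i_G(E) = 1$ by hypothesis, we get $p^i_G(E \cap G) = 1$. This uses only that $p^i_G$ is a probability measure, so the intersection of two probability-one events has probability one.

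Hence $F$ belongs to the family $\mathcal E$ of \eqref{eq:augment}, witnessed by $G \in \mathcal G_i$ with $F \subseteq G$ and $p^i_G(F) = 1$. By $1$-closedness (Definition \ref{def:1-closed}), $\mathcal E = \mathcal G_i$, so $F \in \mathcal G_i$. We must check that $F$ is nonempty, since conditioning events are nonempty. This is where the hypothesis $\omega \in E$ enters: $\omega \in G$ by definition of the atom, so $\omega \in E \cap G = F$. (Nonemptiness is in any case automatic, because $p^i_G(F) = 1$ rules out $F = \emptyset$; the role of $\omega \in E$ is to place $\omega$ in $F$.)

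Now $F$ is a member of $\mathcal G_i$ containing $\omega$. By the definition of $m_i(\omega)$ as the intersection of all members of $\mathcal G_i$ containing $\omega$, we get $m_i(\omega) \subseteq F \subseteq E$, which is the claim.

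There is no real obstacle. The only point needing care is passing from $E$ to $E \cap G$: the definition of $\mathcal E$ requires the event to be contained in a conditioning event, and $E$ itself need not be. Once we work with $E \cap G$ instead, the argument is just membership in $\mathcal E$ followed by minimality of the atom.
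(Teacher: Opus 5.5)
Your proof is correct and follows the paper's argument essentially step for step. You intersect $E$ with the atom, use concentration to get conditional probability $1$, invoke $1$-closedness to place $m_i(\omega)\cap E$ in $\mathcal G_i$, and conclude by minimality of the atom. You are also slightly more explicit than the paper that $\omega \in E$ is needed precisely to put $\omega$ in this new conditioning event.
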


\begin{proof}
Write $m = m_i(\omega)$ and assume $\omega \in E$ and $p^i_m(E) = 1$.  Let $H = m \cap E$ and use concentration to get:
\begin{equation}
p^i_m(H) = p^i_m( m \cap E) = 1.
\end{equation}
Since $(\mathcal G_i, p^i)$ is $1$-closed, every subset of a conditioning event that has conditional probability $1$ is itself in $\mathcal G_i$. Therefore $H \in \mathcal G_i$.  Since $m$ is the $\mathcal G_i$-atom containing $\omega$, no member of $\mathcal G_i$ containing $\omega$ can be a strict subset of $m$, so that:
\begin{equation}
H = m \cap E \supseteq m,
\end{equation}
and therefore $m \subseteq E$.
\end{proof}

\begin{Def} \label{def:saturation}
Fix $i = A, B$ and an event $E$.  We say that $E$ is $\boldsymbol{i}$-\textbf{saturated} if $\omega \in E$ implies $m_i(\omega)\subseteq E$.
\end{Def}

\begin{lemma} \label{lem:belong}
Fix $i = A, B$ and a nonempty event $E$.  If $E$ is $i$-saturated, then it lies in $\mathcal G_i$.
\end{lemma}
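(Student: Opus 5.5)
The plan is to write $E$ as a finite union of $\mathcal G_i$-atoms and then use closure of $\mathcal G_i$ under unions. Saturation gives, for every $\omega \in E$, the inclusion $m_i(\omega) \subseteq E$. Conversely, $\omega \in m_i(\omega)$ for each $\omega$, since $m_i(\omega)$ is an intersection of sets all containing $\omega$ and is nonempty by covering. Together these give
\begin{equation}
E = \bigcup_{\omega \in E} m_i(\omega).
\end{equation}

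Next I show that each term of this union lies in $\mathcal G_i$. As noted after the definition of $m_A(\omega)$, covering guarantees that the family $\{G \in \mathcal G_i : \omega \in G\}$ is nonempty. Because $\Omega$ is finite, this family is finite, and its intersection contains $\omega$, so it is nonempty. Closure of $\mathcal G_i$ under nonempty intersections (applied finitely often) then gives $m_i(\omega) \in \mathcal G_i$.

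Finally, since $E$ is nonempty and finite, the union in the display above is a nonempty finite union of members of $\mathcal G_i$. Closure under unions therefore yields $E \in \mathcal G_i$. The hypothesis that $E$ is nonempty is needed precisely here, because $\mathcal G_i$ consists only of nonempty events.

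I do not expect a real obstacle. The only point needing care is that the closure properties are applied only to finite families, which is guaranteed by the finiteness of $\Omega$, and that every intersection involved is nonempty. Notably, the argument uses neither the CPS structure $p^i$ nor $1$-closedness; it relies only on the standing closure and covering assumptions on $\mathcal G_i$.
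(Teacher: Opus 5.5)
Your proposal is correct and matches the paper's proof: saturation gives $E = \bigcup_{\omega \in E} m_i(\omega)$, each atom lies in $\mathcal G_i$, and closure under unions (with $E$ nonempty) gives $E \in \mathcal G_i$. Your added checks on finiteness and on the nonemptiness of the intersections defining $m_i(\omega)$ spell out what the paper notes briefly right after defining the atoms.
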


\begin{proof}
If $E$ is $i$-saturated, then:
\begin{equation} \label{eq:saturated}
E = \bigcup_{\omega \in E} m_i(\omega).
\end{equation}
Each atom $m_i(\omega) \in \mathcal G_i$.  Since $\mathcal G_i$
is closed under unions, we conclude that $E \in \mathcal G_i$.
\end{proof}

\begin{lemma} \label{lem:saturate-2}
Suppose the CPSs $(\mathcal G_A, p^A)$ and $(\mathcal G_B, p^B)$ satisfy certainty reflection and are $1$-closed.  Then $A^n$ is $A$-saturated and $B^n$ is $B$-saturated for all $n \ge 0$. Moreover, the set $C^\infty$ is $A$-saturated and $B$-saturated.
\end{lemma}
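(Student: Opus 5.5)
The plan is to reduce every saturation claim to Lemma \ref{lem:certainty-knowledge}. If a set $S$ contains $\omega$ and $p^i_{m_i(\omega)}(S)=1$, then $1$-closedness gives $m_i(\omega)\subseteq S$. So it suffices to show, for each relevant set and each of its points, that the agent's atom-conditional probability of the set is $1$. Certainty reflection supplies exactly this. Note that we cannot argue naively that $\omega'\in m_i(\omega)$ implies $m_i(\omega')=m_i(\omega)$: the atoms need not form a partition, and $m_i(\omega')$ may be a strict subset of $m_i(\omega)$. This is why both hypotheses are needed.

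\textbf{Step 1: sets of the form $\{\omega' : p^i_{m_i(\omega')}(F)=q\}$ are $i$-saturated, for any event $F$ and number $q$.} Call such a set $S$ and take $\omega\in S$. By certainty reflection, $p^i_{m_i(\omega)}(S)=1$. Since $\omega\in S$, Lemma \ref{lem:certainty-knowledge} gives $m_i(\omega)\subseteq S$. Taking $F=E$ and $q=q_A$ or $q_B$ shows that $A^0$ is $A$-saturated and $B^0$ is $B$-saturated. Taking $q=1$ shows that $C_A(F)$ is $A$-saturated and $C_B(F)$ is $B$-saturated for every event $F$.

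\textbf{Step 2: the sets $A^n$ and $B^n$.} An intersection of $i$-saturated sets is $i$-saturated: if $\omega$ lies in both, then $m_i(\omega)$ lies in each. Hence, by induction on $n$, $A^{n+1}=A^n\cap C_A(B^n)$ is $A$-saturated, using Step 1 for the factor $C_A(B^n)$. Symmetrically, $B^{n+1}$ is $B$-saturated.

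\textbf{Step 3: $C^\infty$, which I expect to be the main obstacle.} $C^\infty$ mixes $A$-saturated and $B$-saturated sets, so Step 2 alone does not give $A$-saturation. First note that $A^{n+1}\subseteq A^n$ and $B^{n+1}\subseteq B^n$. Since $\Omega$ is finite, both sequences stabilize, so $C^\infty=A^N\cap B^N$ for some $N$, and also $C^\infty=A^{N+1}\cap B^{N+1}$.

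Take $\omega\in C^\infty$ and write $m=m_A(\omega)$. Since $\omega\in A^{N}$ and $A^N$ is $A$-saturated, $m\subseteq A^N$, so $p^A_m(A^N)=1$. Since $\omega\in A^{N+1}\subseteq C_A(B^N)$, we have $p^A_m(B^N)=1$. Combining these two facts gives $p^A_m(C^\infty)=1$. Because $\omega\in C^\infty$, Lemma \ref{lem:certainty-knowledge} then yields $m\subseteq C^\infty$. The argument for $B$-saturation is symmetric, using $C_B(A^N)\supseteq B^{N+1}$.
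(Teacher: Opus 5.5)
Your proof is correct and follows essentially the same route as the paper. It uses certainty reflection plus Lemma \ref{lem:certainty-knowledge} for $A^0$ and $C_A(F)$, induction through intersections, and for $C^\infty$ the fact that a point of $C^\infty$ is certain of $B^n$ while lying in $B^n$. The differences are cosmetic: you handle $A^0$ and $C_A(F)$ in one stroke, and you apply Lemma \ref{lem:certainty-knowledge} once to $C^\infty$ after a finite-stabilization step (harmless but unnecessary), whereas the paper applies it to each $B^n$ separately.
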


\begin{proof}
If $\omega \in A^0$, then certainty reflection implies:
\begin{equation}
p^A_{m_A(\omega)}(A^0) = 1.
\end{equation}
Since $\omega \in A^0$, Lemma \ref{lem:certainty-knowledge} implies $m_A(\omega) \subseteq A^0$, as required.  The argument for $B^0$ is symmetric.

For any event $F$, the set $C_A(F)$ is $A$-saturated.  Indeed, if $\omega \in C_A(F)$, then:
\begin{equation}
p^A_{m_A(\omega)}(F) = 1,
\end{equation}
and certainty reflection (with $q_A = 1$) gives:
\begin{equation}
p^A_{m_A(\omega)}(C_A(F)) = 1.
\end{equation}
Since $\omega \in C_A(F)$, Lemma \ref{lem:certainty-knowledge} implies $m_A(\omega) \subseteq C_A(F)$.

Now proceed by induction.  If $A^n$ is $A$-saturated and $C_A(B^n)$ is $A$-saturated, then it is immediate that their intersection $A^{n+1} = A^n \cap C_A(B^n)$ is $A$-saturated.  The argument for $B^n$ is symmetric.

Let $x \in C^\infty$ and $y \in m_A(x)$.  Since each $A^n$ is $A$-saturated and $x \in A^n$ for all $n$, we have $y \in A^n$ for all $n$.  Also, $x \in A^{n+1}$ implies $x \in C_A(B^n)$, and so:
\begin{equation}
p^A_{m_A(x)}(B^n) = 1.
\end{equation}
Since $x \in B^n$, Lemma \ref{lem:certainty-knowledge} gives $m_A(x) \subseteq B^n$, from which $y \in B^n$ for all $n$.  Therefore, $y \in A^n \cap B^n$ for all $n$, and so $y \in C^\infty$.  We have now shown that $m_A(x) \subseteq C^\infty$.  The proof for $m_B(x)$ is symmetric.
\end{proof}

\begin{lemma} \label{lem:contain}
If $\omega \in C^\infty$, then $m(\omega) \subseteq C^\infty$.
\end{lemma}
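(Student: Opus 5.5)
The plan is to show that $C^\infty$ itself is a member of the meet $\mathcal G_A \wedge \mathcal G_B$. The conclusion then follows from the definition of the atom $m(\omega)$ as the intersection of all members of the meet containing $\omega$. The assumptions in force are those of Lemma \ref{lem:saturate-2}: certainty reflection and $1$-closedness for both agents.

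First, if $C^\infty$ is empty the statement is vacuous, so assume $\omega \in C^\infty$ and hence $C^\infty \neq \emptyset$. By Lemma \ref{lem:saturate-2}, $C^\infty$ is both $A$-saturated and $B$-saturated. Lemma \ref{lem:belong}, applied once with $i = A$ and once with $i = B$, then gives $C^\infty \in \mathcal G_A$ and $C^\infty \in \mathcal G_B$. Therefore $C^\infty \in \mathcal G_A \wedge \mathcal G_B$.

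Second, by definition $m(\omega) = \bigcap \{G \in \mathcal G_A \wedge \mathcal G_B : \omega \in G\}$. Since $C^\infty$ is one of the sets in this intersection, we get $m(\omega) \subseteq C^\infty$, as required. It is worth noting that the meet inherits closure under nonempty intersections from $\mathcal G_A$ and $\mathcal G_B$, so $m(\omega)$ is itself in the meet. This fact is not needed for the inclusion, only for the later use of local consistency.

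There is no real obstacle: the work was already done in Lemma \ref{lem:saturate-2}. The only point needing care is that Lemma \ref{lem:belong} requires a nonempty event, which the hypothesis $\omega \in C^\infty$ supplies. One should also state explicitly that the standing hypotheses of Lemma \ref{lem:saturate-2} are assumed.
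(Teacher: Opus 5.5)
Your proof is correct and is the paper's argument: use Lemma~\ref{lem:saturate-2} to get that $C^\infty$ is both $A$-saturated and $B$-saturated, and use $\omega \in C^\infty$ for nonemptiness. Then Lemma~\ref{lem:belong} places $C^\infty$ in $\mathcal G_A \wedge \mathcal G_B$, and minimality of $m(\omega)$ gives the inclusion. You are also right to make explicit that the standing hypotheses of Lemma~\ref{lem:saturate-2}, certainty reflection and $1$-closedness, are needed, which the lemma's statement leaves implicit.
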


\begin{proof}
By the previous result, the set $C^\infty$ is both $A$-saturated and $B$-saturated.  Since $\omega \in C^\infty$, the set $C^\infty$ is nonempty.  Using Lemma \ref{lem:belong}, it follows that $C^\infty$ lies in $\mathcal G_A \wedge \mathcal G_B$.  Since $m(\omega)$ is the smallest such set containing $\omega$, we conclude that $m(\omega) \subseteq C^\infty$.
\end{proof}

The next two lemmas lead to the key averaging argument over unions of possibly non-disjoint atoms that is needed for our Agreement Theorem.  The strategy in Lemma \ref{lem:ordered} is analogous to a proof step in Theorem 7 of Samet (1990), which averages over non-disjoint atoms of knowledge structures.  A knowledge-based analog to our Lemma \ref{lem:average-atoms} is postulated as a property (the generalized sure-thing principle) in Geanakoplos (1989, 2021).  We return to the certainty-knowledge relationship in Section \ref{sec:5}. 

\begin{lemma} \label{lem:ordered}
Fix $i = A, B$ and a set $G$ in $\mathcal G_i$.  Let $\mu_1, \dots, \mu_K$ be the distinct $\mathcal G_i$-atoms contained in $G$, ordered so that $\mu_r \subsetneq \mu_s$ implies $r < s$.  For each $j = 1, \dots, K$, let $ U_j = \bigcup_{k=1}^j \mu_k$.  Then:
\begin{enumerate}  [label=(\roman*)]
\item $G = \bigcup_{k=1}^K \mu_k$;
\item for every $j = 2, \dots, K$:
\begin{equation} \label{eq:ordered}
U_{j-1} \cap \mu_j = \bigcup_{\{r < j : \mu_r \subsetneq \mu_j\}} \mu_r.
\end{equation}
\end{enumerate}
\end{lemma}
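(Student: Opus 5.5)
Throughout, the single tool I will use is this: for states $x,y$, $y \in m_i(x)$ implies $m_i(y) \subseteq m_i(x)$. This holds because $m_i(x) \in \mathcal G_i$ contains $y$, and $m_i(y)$ is the intersection of all members of $\mathcal G_i$ containing $y$. Before the two parts, I will record why the ordering exists. Strict inclusion among the finitely many distinct atoms $\mu_1,\dots,\mu_K$ is a strict partial order, so any linear extension of it (for instance, ordering by cardinality) gives the required indexing.

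For part (i), the inclusion $\bigcup_k \mu_k \subseteq G$ holds by the choice of the $\mu_k$. For the reverse inclusion, take $\omega \in G$. Since $G \in \mathcal G_i$ contains $\omega$, we have $m_i(\omega) \subseteq G$, so $m_i(\omega)$ is one of the atoms contained in $G$, say $\mu_k$. Then $\omega \in \mu_k$, which gives $G = \bigcup_{k=1}^K \mu_k$.

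For part (ii), I will prove the two inclusions separately.

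For ``$\supseteq$'': if $r<j$ and $\mu_r \subsetneq \mu_j$, then $\mu_r \subseteq U_{j-1}$ and $\mu_r \subseteq \mu_j$, so $\mu_r \subseteq U_{j-1}\cap\mu_j$.

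For ``$\subseteq$'', which is the only real step:
\begin{enumerate}[label=(\alph*)]
\item Take $y \in U_{j-1}\cap \mu_j$. Then $y \in \mu_r$ for some $r<j$.
\item Each atom is $m_i(x)$ for some $x$. Applying the tool with $\mu_j = m_i(x)$ and $y \in \mu_j$ gives $m_i(y) \subseteq \mu_j$. Applying it with $y \in \mu_r$ gives $m_i(y) \subseteq \mu_r$.
\item Since $m_i(y) \subseteq \mu_j \subseteq G$, the set $m_i(y)$ is itself one of the atoms, say $\mu_s$.
\item From $\mu_s \subseteq \mu_r$, either $s = r$ or $\mu_s \subsetneq \mu_r$. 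In both cases $s \le r < j$, using the ordering property in the second case.
\item We have $\mu_s \subseteq \mu_j$ and $s \neq j$. Since the atoms are distinct, $\mu_s \subsetneq \mu_j$.
\end{enumerate}
So $y \in \mu_s$ with $s<j$ and $\mu_s\subsetneq\mu_j$, which places $y$ in the right-hand side.

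The main subtlety is in step (c). We must pass from the atom $\mu_r$ that happens to contain $y$ to the minimal atom $m_i(y)$, which lies inside both $\mu_r$ and $\mu_j$. Only that minimal atom is guaranteed to be strictly below $\mu_j$ in the ordering.
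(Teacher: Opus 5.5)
Your proposal is correct and follows essentially the same route as the paper: part (i) uses the fact that $G$ is $i$-saturated, and the nontrivial inclusion in part (ii) passes to the minimal atom $m_i(y) \subseteq \mu_r \cap \mu_j$ and locates it in the ordering. Your direct chain $s \le r < j$ is a slightly tidier replacement for the paper's case analysis ruling out $r > j$ and $r = j$, and your linear-extension remark justifies the existence of the ordering, which the statement simply assumes.
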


\begin{proof}
For Part (i), it is immediate that $G$ is $i$-saturated.  The conclusion follows by writing $G$ in the form of Equation \ref{eq:saturated}.  (This does not use the ordering of atoms.)

For Part (ii), fix some $2 \le j \le K$.  We first show that the left-hand side of Equation \ref{eq:ordered} is contained in the right-hand side.  Fix $x \in U_{j-1} \cap \mu_j$.  From $x \in U_{j-1}$, there is some $\ell < j$ such that $x \in \mu_\ell$.  Because $\mu_j, \mu_\ell \in \mathcal G_i$ and both contain $x$, the atom $m_i(x)$ satisfies $m_i(x) \subseteq \mu_j \cap \mu_\ell$.  Since $x \in G$, Part (i) implies that $m_i(x) = \mu_r$ for some $1 \le r \le K$.  This implies $\mu_r \subseteq \mu_j$.  We now claim that $r < j$.  Indeed, if $r > j$, then, since the atoms are distinct, we get $\mu_r \subsetneq \mu_j$ and so $r < j$, a contradiction.  Suppose $r = j$.  Then $\mu_j = m_i(x) \subseteq \mu_\ell$.  Since $\ell \not= j$ and the atoms are distinct, this inclusion must be strict.  From the ordering of the atoms, we then get $j < \ell$, a contradiction.  This implies that $r < j$ and therefore, again using distinctness, $\mu_r \subsetneq \mu_j$.  We have now shown that $x \in \mu_r$ for some $r < j$ with $\mu_r \subsetneq \mu_j$, establishing the claimed inclusion in Equation \ref{eq:ordered}.

For the reverse inclusion, fix $r < j$ and suppose $\mu_r \subsetneq \mu_j$.  Then $\mu_r \subseteq U_{j-1}$ by the definition of $U_{j-1}$ and therefore $\mu_r \subseteq U_{j-1} \cap \mu_j$.  Taking the union over all such $r$ establishes that the right-hand side of Equation \ref{eq:ordered} is contained in the left-hand side, completing the proof.
\end{proof}

Note that Part (ii) of Lemma \ref{lem:ordered} says that $U_{j-1} \cap \mu_j$ is either a union of atoms chosen from $\{\mu_1, \dots, \mu_{j-1}\}$ or empty.

\begin{lemma} \label{lem:average-atoms}
Fix $i = A, B$ and a set $G$ in $\mathcal G_i$.  Let $\mu_1, \dots, \mu_K$ be the distinct $\mathcal G_i$-atoms contained in $G$.  Then, if $p^i_{\mu_j}(E) = q_i$ for all $j = 1, \dots, K$, we have $p^i_G(E) = q_i$.
\end{lemma}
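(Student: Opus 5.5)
The plan is to prove the stronger claim that $p^i_H(E) = q_i$ for \emph{every} nonempty $H \in \mathcal G_i$ with $H \subseteq G$. I will argue by strong induction on $|H|$, using an inclusion--exclusion decomposition of $p^i_H$ that rests only on concentration and the chain rule. The ordering idea of Lemma \ref{lem:ordered} enters through the choice of a maximal atom at each step.

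First observe that any nonempty $H \in \mathcal G_i$ with $H \subseteq G$ is $i$-saturated. Hence, as in Lemma \ref{lem:ordered}(i), $H$ is the union of the $\mathcal G_i$-atoms it contains, and each of these is among $\mu_1,\dots,\mu_K$.

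\emph{Base case.} If $H$ is itself an atom, then $p^i_H(E)=q_i$ by hypothesis.

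\emph{Choice of decomposition.} Suppose $H$ is not an atom. Choose an atom $\mu \subseteq H$ that is maximal under inclusion among the atoms contained in $H$, and fix $x$ with $m_i(x) = \mu$. Let $V$ be the union of the atoms contained in $H$ that are not contained in $\mu$, and set $D = V \cap \mu$. These sets have the following properties.
\begin{enumerate}[label=(\alph*)]
\item $V \neq \emptyset$. Otherwise every atom in $H$ lies in $\mu$, so $H = \mu$ would be an atom.
\item $x \notin V$. If $x \in \mu'$ for some atom $\mu' \subseteq H$ with $\mu' \not\subseteq \mu$, then $\mu = m_i(x) \subsetneq \mu'$, contradicting the maximality of $\mu$. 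Hence $V \subsetneq H$ and $D \subsetneq H$.
\item $\mu \subsetneq H$, since $H$ is not an atom.
\item $V$, $\mu$ and $D$ (when $D \neq \emptyset$) all lie in $\mathcal G_i$, by closure under unions and nonempty intersections.
\item $H = V \cup \mu$.
\end{enumerate}
By the induction hypothesis, $p^i_V(E) = p^i_\mu(E) = q_i$, and also $p^i_D(E) = q_i$ when $D \neq \emptyset$.

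\emph{Averaging step.} Write $P = p^i_H$. By concentration, $P(H)=1$, so $P(E) = P(E\cap H)$. Additivity then gives
\begin{equation*}
P(E) = P(E \cap V) + P(E \cap \mu) - P(E \cap D).
\end{equation*}
The chain rule applied to $E\cap V \subseteq V \subseteq H$ gives $P(E \cap V) = P(V)\,p^i_V(E\cap V)$. Concentration of $p^i_V$ gives $p^i_V(E \cap V) = p^i_V(E) = q_i$, so $P(E \cap V) = q_i P(V)$. The same reasoning gives $P(E\cap\mu) = q_i P(\mu)$, and $P(E \cap D) = q_i P(D)$; the latter also holds trivially if $D=\emptyset$. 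Therefore
\begin{equation*}
P(E) = q_i\bigl(P(V)+P(\mu)-P(D)\bigr) = q_i P(H) = q_i .
\end{equation*}
This completes the induction, and taking $H = G$ yields the lemma.

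The main obstacle is choosing the decomposition so that every piece is strictly smaller than $H$ and still lies in $\mathcal G_i$, which the induction needs. Atoms may overlap, and a careless split such as $V = H \setminus \mu$ is not a member of $\mathcal G_i$, since the family is not closed under complements. Choosing a maximal $\mu$ and a witness $x$ with $m_i(x)=\mu$ is exactly what guarantees $V \subsetneq H$. This mirrors the structure in Lemma \ref{lem:ordered}(ii), where the overlap $U_{j-1}\cap\mu_j$ is a union of earlier atoms.
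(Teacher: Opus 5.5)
Your proof is correct and is essentially the paper's argument: you induct, write the set as the union of a maximal atom and a second member of $\mathcal G_i$, and combine inclusion--exclusion with the chain rule on the two pieces and their overlap. The differences are only bookkeeping. You induct on $|H|$ rather than on the number of atoms, and you take $V$ to be the union of atoms not inside $\mu$ rather than the paper's $U_{K-1}$ (all atoms other than $\mu_K$). You get $D \in \mathcal G_i$ directly from closure under nonempty intersections rather than via Lemma~\ref{lem:ordered}(ii). You also verify explicitly the strict decrease $V \subsetneq H$, which the paper leaves implicit when it applies the induction hypothesis to $U_{K-1}$.
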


\begin{proof}
The proof is by induction on $K$.  If $K = 1$, then $G = \mu_1$ by Part (i) of Lemma \ref{lem:ordered} and there is nothing to prove.  Now suppose the result holds whenever a member of $\mathcal G_i$ contains at most $K - 1$ distinct $\mathcal G_i$-atoms.  Let $G \in \mathcal G_i$ contain exactly $K$ such atoms, say $\mu_1, \dots, \mu_K$, and suppose that $p^i_{\mu_j}(E) = q_i$ for every $j = 1, \dots, K$.  Order the atoms and define $U_{K-1}$ as in Lemma \ref{lem:ordered}.  Applying Part (i) of Lemma \ref{lem:ordered}, we can write:
\begin{equation} \label{eq:average}
G = U_{K-1} \cup \mu_K.
\end{equation}
By closure under unions, $U_{K-1}$ lies in $\mathcal G_i$, and so the induction hypothesis gives $p^i_{U_{K-1}}(E) = q_i$.  Also, by assumption, we have $p^i_{\mu_K}(E) = q_i$.

Next apply Part (ii) of Lemma \ref{lem:ordered} with $j = K$.  If $U_{K-1} \cap \mu_K$ is nonempty, it is a union of some of the atoms
$\mu_1, \dots, \mu_{K-1}$ and therefore, by closure under unions, lies in $\mathcal G_i$.  The induction hypothesis gives:
\begin{equation}
p^i_{U_{K-1} \cap \mu_K}(E) = q_i.
\end{equation}
Now use the chain rule for CPSs to write:
\begin{align}
p^i_G(E \cap U_{K-1}) &= p^i_G(U_{K-1}) \times p^i_{U_{K-1}}(E), \\
p^i_G(E \cap \mu_K) &= p^i_G(\mu_K) \times p^i_{\mu_K}(E), \\
p^i_G(E \cap U_{K-1} \cap \mu_K) &= p^i_G(U_{K-1} \cap \mu_K) \times p^i_{U_{K-1} \cap \mu_K}(E). \label{eq:third-1}
\end{align}
Substituting the value $q_i$ into each of these identities yields:
\begin{align}
p^i_G(E \cap U_{K-1}) &= q_i \times p^i_G(U_{K-1}), \\
p^i_G(E \cap \mu_K) &= q_i \times p^i_G(\mu_K), \\
p^i_G(E \cap U_{K-1} \cap \mu_K) &= q_i \times p^i_G(U_{K-1} \cap \mu_K). \label{eq:third-2}
\end{align}
Using Equation \ref{eq:average}, the inclusion-exclusion principle applied to the probability measure $p^i_G$ yields:
\begin{equation} \label{eq:third-3}
p^i_G(E) = p^i_G( E \cap U_{K-1}) + p^i_G(E \cap \mu_K) - p^i_G(E \cap U_{K-1} \cap \mu_K),
\end{equation}
so that:
\begin{equation}
p^i_G(E) = q_i \times \bigl[p^i_G(U_{K-1}) + p^i_G(\mu_K) - p^i_G(U_{K-1}\cap \mu_K)\bigr].
\end{equation}
Applying inclusion-exclusion again gives:
\begin{equation}
p^i_G(U_{K-1}) + p^i_G(\mu_K) - p^i_G(U_{K-1}\cap \mu_K) = p^i_G(G) = 1,
\end{equation}
and therefore $p^i_G(E) = q_i$.  If $U_{K-1} \cap \mu_K$ is empty, we run the same argument omitting Equations \ref{eq:third-1} and \ref{eq:third-2} and the third term on the right-hand side of Equation \ref{eq:third-3} (and carrying this omission through the next two equations).  This completes the induction step.
\end{proof}

We now have all the ingredients needed to state and prove our Agreement Theorem.

\begin{theorem} \label{thm:agreement}
Suppose the CPSs $(\mathcal G_A, p^A)$ and $(\mathcal G_B, p^B)$ satisfy certainty reflection and are $1$-closed.  Fix an event $E$, a state $\omega$, and numbers $q_A, q_B \in [0,1]$.  If it is common certainty at $\omega$ that Alice assigns probability $q_A$ to $E$ and Bob assigns probability $q_B$ to $E$, and local consistency holds at $\omega$, then $q_A = q_B$.
\end{theorem}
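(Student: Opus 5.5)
The plan is to compute the common conditional probability $p^A_{m(\omega)}(E)=p^B_{m(\omega)}(E)$ in two ways, once through Alice's atoms and once through Bob's, and read off $q_A$ and $q_B$ respectively.

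First, since $\omega\in C^\infty$, Lemma \ref{lem:contain} gives $m(\omega)\subseteq C^\infty$. In particular $m(\omega)\subseteq A^0\cap B^0$, because $C^\infty\subseteq A^0\cap B^0$. Moreover $m(\omega)\in\mathcal G_A\wedge\mathcal G_B$. This holds because $\mathcal G_A\wedge\mathcal G_B$ is closed under nonempty intersections, as both families are, and $\Omega$ lies in it; so the atom is the intersection of all members containing $\omega$ and is itself a member. Hence $G:=m(\omega)$ lies in $\mathcal G_A$ and in $\mathcal G_B$.

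Next, work on Alice's side. Let $\mu_1,\dots,\mu_K$ be the distinct $\mathcal G_A$-atoms contained in $G$. We need every such atom to have the form $m_A(x)$ for some $x\in G$, so that the hypothesis $x\in A^0$ applies to it. Here we use the definition of $\mathcal G_A$-atom from Lemma \ref{lem:ordered}: each $\mu_j$ equals $m_A(x)$ for some $x$. If $\mu_j\subseteq G$, then $x\in\mu_j\subseteq G\subseteq A^0$, so $p^A_{\mu_j}(E)=q_A$. Lemma \ref{lem:average-atoms}, applied with $i=A$ and this $G\in\mathcal G_A$, then yields $p^A_G(E)=q_A$. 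The symmetric argument with $G\in\mathcal G_B$ and $G\subseteq B^0$ gives $p^B_G(E)=q_B$.

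Finally, local consistency at $\omega$ says $p^A_{m(\omega)}=p^B_{m(\omega)}$, so $q_A=p^A_G(E)=p^B_G(E)=q_B$.

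The substantive work is already in Lemmas \ref{lem:saturate-2}--\ref{lem:average-atoms}. The step needing most care here is confirming that every $\mathcal G_A$-atom inside $G$ has the form $m_A(x)$ for some $x\in G$, and that $m(\omega)$ belongs to both families. I would settle the latter by the intersection-closure argument above, and the former directly from the definition of atoms.
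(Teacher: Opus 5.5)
Your proposal is correct and follows the paper's proof. You use Lemma \ref{lem:contain} to place $m(\omega)$ inside $C^\infty \subseteq A^0 \cap B^0$, apply Lemma \ref{lem:average-atoms} to each agent's atoms inside $m(\omega)$, and conclude by local consistency. Beyond that, you only make explicit two points the paper takes for granted: that $m(\omega)$ lies in both $\mathcal G_A$ and $\mathcal G_B$, and that each atom inside it is some $m_A(x)$ with $x \in m(\omega)$.
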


\begin{proof}
Assume $\omega \in C^\infty$, and let $H = m(\omega)$.  By Lemma \ref{lem:contain}, we have $H \subseteq C^\infty$.  From $H \in \mathcal G_A \wedge \mathcal G_B$, we have $H \in \mathcal G_A$, in particular, and therefore, by Part (i) of Lemma \ref{lem:ordered}:
\begin{equation}
H = \bigcup_{k=1}^K \mu_k,
\end{equation}
where $\mu_1, \dots, \mu_K$ are the distinct $\mathcal G_A$-atoms contained in $H$.  From $H \subseteq C^\infty \subseteq A_0$, every $x \in H$ satisfies $p^A_{m_A(x)}(E) = q_A$.  Therefore, each atom $\mu_k$ in the decomposition of $H$ satisfies $p^A_{\mu_k}(E) = q_A$.  By Lemma \ref{lem:average-atoms}, it follows that $p^A_H(E) = q_A$.  Running the same argument with $B$ in place of $A$, we get $p^B_H(E) = q_B$.  Since $H = m(\omega) \in \mathcal G_A \wedge\mathcal G_B$, local consistency at $\omega$ implies $p^A_H = p^B_H$ and therefore $q_A = q_B$.
\end{proof}

The next example shows that the conditions of our Agreement Theorem do not trivialize the framework.  It does not involve a common prior ($p^A_\Omega \not= p^B_\Omega$), both agents' conditioning families are strict subsets of the power set (minus the empty set), and the agreed-on probability of $E$ lies strictly between $0$ and $1$.

\begin{Ex} \label{ex:1-closed-cc}
Let $\Omega = \{a, b, c, d\}$ and set:
\begin{align}
\mathcal G_A &= \bigl\{\{a, b\}, \{c\}, \{d\}, \{c,d\},  \{a, b, c\}, \{a,b,d\}, \Omega\bigr\}, \\
\mathcal G_B &= \bigl\{\{c,d\}, \{a, b\},  \Omega \bigr\}.
\end{align}
Define CPSs as follows:
\begin{align}
&p^A_\Omega(\{a\}) = p^A_\Omega(\{b\}) = \dfrac{1}{2}, \\
&p^A_{\{c, d\}} (\{c\}) = p^A_{\{c,d\}}(\{d\}) =\frac{1}{2}, \\
&p^B_\Omega(\{c\}) = p^B_\Omega(\{d\}) = \dfrac{1}{2}, \\
&p^B_{\{a, b\}}(\{a\}) = p^B_{\{a, b\}} (\{b\})=\dfrac{1}{2},
\end{align}
It can be checked that both ${\cal G}_A$ and ${\cal G}_B$ satisfy certainty reflection and are $1$-closed, and that local consistency holds at states $a, b, c, d$.  

Let $E = \{a\}$ be the event of interest, $q_A = q_B = 1/2$, and calculate:
\begin{align}
A^0 = \{ a, b\}&, \,\,\,\, B^0 = \{a, b\}, \\
C_A(B^0) = \{ a, b\}&, \,\,\,\, C_B(A^0) = \{a,b\}, \\ 
A^1 = \{ a, b\}&, \,\,\,\, B^1 = \{a, b\},
\end{align}
from which the recursion stabilizes at:
\begin{equation}
A^n = \{ a, b\} \,\, \text{and} \,\, B^n = \{a, b\} \,\, \text{for all} \,\, n.
\end{equation}
It follows that $C^\infty = \{a, b\}$.  At state $a$ or $b$, it is common certainty that Alice and Bob each assign probability $1/2$ to $E$.  As promised, this example does not involve a common prior, the $1$-closed conditioning families are strict subsets of the power set, and the commonly-certain probability of $E$ lies in $(0, 1)$.
\end{Ex}

\section{1-Augmentation of CPSs} \label{sec:4}
The $1$-closedness property can be obtained via an augmentation process performed on a conditioning family.  Starting from a CPS $(\mathcal G, p)$, we enlarge $\mathcal G$ by adding in every event in the family $\mathcal E$ defined in Equation \ref{eq:augment}.  We then extend the CPS to the enlarged family.  The extension is obtained via the R\'enyi (1956) representation theorem for CPSs defined on conditioning families that are closed under finite unions.  We review this theorem below.  The details of extending a CPS are in the Appendix.  We go on to show that the $1$-augmentation operation stabilizes after finite many rounds and the resulting CPS is $1$-closed.  We illustrate this process by revisiting Example \ref{ex:disagree-cc} and showing that common certainty of disagreement disappears under augmentation.

\begin{Def} \label{def:1-augmentation}
The \textbf{1-augmentation} of $\mathcal G$, written $\widehat{\mathcal G}$, is the smallest family of subsets of $\Omega$ that contains $\mathcal E$, is closed under unions and nonempty intersections, and covers $\Omega$.
\end{Def}

We now summarize the R\'enyi (1956) representation theorem.  Fix a measurable space $(\Omega, \mathcal F)$.

\begin{Def} \label{def:dgen}
Let $(\Gamma, \prec)$ be a totally ordered set, and let $\{\mu_{\gamma}\}_{\gamma\in\Gamma}$ be measures on
$(\Omega, \mathcal F)$.  We call this family \textbf{dimensionally ordered} relative to $\mathcal G$ if, for every $G \in \mathcal G$,
there is a unique index $\gamma(G) \in \Gamma$ such that:
\begin{enumerate} [label=(\roman*)]
\item $0 < \mu_{\gamma(G)}(G) < +\infty$;
\item if $\alpha \prec \beta$ and $\mu_{\alpha}(G) < +\infty$, then
$\mu_{\beta}(G) = 0$.
\end{enumerate}
\end{Def}

It is readily checked that such a family generates a CPS by setting, for any event $E$:
\begin{equation} \label{eq:dgen-representation}
p_G(E) = \dfrac{\mu_{\gamma(G)}(E \cap G)}{\mu_{\gamma(G)}(G)}.
\end{equation}

\begin{Def}
A CPS defined from a dimensionally ordered family is called \textbf{dimensionally generated}.    
\end{Def}

The next result is Theorem 1 in R\'enyi (1956).  (In the case of finite $\Omega$ with $\mathcal G = 2^\Omega \backslash \emptyset$, Theorem 1 in Myerson, 1986 provides an equivalent representation by a sequence of full-support probability measures whose conditional probabilities converge pointwise.  We need the general case where $\mathcal G \not= 2^\Omega \backslash \emptyset$.)

\begin{theorem} \label{thm:renyi}
If $(\mathcal G, p)$ is a CPS and $\mathcal G$ is closed under finite unions, then it is dimensionally generated.
\end{theorem}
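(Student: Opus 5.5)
The plan is to prove the statement directly in our finite setting by building a finite chain of nested ``layers'' $\Omega_1 \supsetneq \Omega_2 \supsetneq \cdots \supsetneq \Omega_L$ of members of $\mathcal G$. I then attach to each layer a measure that equals the corresponding conditional probability on the layer and $+\infty$ off it. The index set will be $\Gamma = \{1, \dots, L\}$ with the \emph{reversed} order, so that $j \prec k$ iff $j > k$: deeper layers come first.

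\textbf{Construction of the layers.} Let $\Omega_1$ be the union of all members of $\mathcal G$; it lies in $\mathcal G$ by closure under finite unions, since $\mathcal G$ is finite. Given $\Omega_k \in \mathcal G$, let $\mathcal N_k$ be the collection of $G \in \mathcal G$ with $G \subseteq \Omega_k$ and $p_{\Omega_k}(G) = 0$. If $\mathcal N_k$ is empty, stop with $L = k$. Otherwise set $\Omega_{k+1} = \bigcup \mathcal N_k \in \mathcal G$. Then $p_{\Omega_k}(\Omega_{k+1}) = 0$, because a finite union of null sets is null. Since $p_{\Omega_k}(\Omega_k) = 1$, the inclusion $\Omega_{k+1} \subsetneq \Omega_k$ is strict, so the process terminates by finiteness of $\Omega$. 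For each $G \in \mathcal G$ define its depth $\gamma(G)$ as the $k$ with $G \subseteq \Omega_k$ and $p_{\Omega_k}(G) > 0$. By induction, $G \subseteq \Omega_1$, and if $G \subseteq \Omega_k$ with $p_{\Omega_k}(G) = 0$ then $G \in \mathcal N_k$, so $G \subseteq \Omega_{k+1}$. Because the chain is finite and $p_{\Omega_L}(G)=0$ is impossible at the last step, such a $k$ exists. It is unique, since $G \subseteq \Omega_j$ for some deeper $j > k$ would force $p_{\Omega_k}(G) \le p_{\Omega_k}(\Omega_j) = 0$.

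\textbf{The measures.} For each $j$, define $\mu_j$ pointwise by
\begin{equation*}
\mu_j(\{x\}) = p_{\Omega_j}(\{x\}) \text{ for } x \in \Omega_j, \qquad \mu_j(\{x\}) = +\infty \text{ for } x \notin \Omega_j,
\end{equation*}
and extend by additivity. I would then check Definition \ref{def:dgen} for $G$ with $\gamma(G) = k$.

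\begin{enumerate}[label=(\alph*)]
\item Since $G \subseteq \Omega_k$, we get $\mu_k(G) = p_{\Omega_k}(G) \in (0,1]$, which is (i).
\item For a deeper index $j > k$ (so $j \prec k$), $G \not\subseteq \Omega_j$ by uniqueness of depth. Hence $G$ contains a point of infinite $\mu_j$-mass, and $\mu_j(G) = +\infty$.
\item For a shallower index $i < k$ (so $k \prec i$), $G \subseteq \Omega_k \subseteq \Omega_i$ and $p_{\Omega_i}(G) = 0$, so $\mu_i(G) = 0$.
\end{enumerate}

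Thus along the order $\prec$, the values of $\mu_\cdot(G)$ are $+\infty$, then a finite positive value exactly at $k$, then $0$. This gives both (ii) and the uniqueness of $\gamma(G)$. Finally, the representation (\ref{eq:dgen-representation}) follows from the chain rule applied to $E \cap G \subseteq G \subseteq \Omega_k$:
\begin{equation*}
\frac{\mu_k(E \cap G)}{\mu_k(G)} = \frac{p_{\Omega_k}(E\cap G)}{p_{\Omega_k}(G)} = p_G(E \cap G) = p_G(E),
\end{equation*}
where the last step uses concentration.

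\textbf{Main obstacle.} The delicate point is choosing the orientation of the order and the placement of the $+\infty$ masses so that (ii) holds. A naive choice, with finite measures and the shallow layer first, fails. For example, take $G = \{a,b\}$ with $p_G = \delta_a$ and $\{b\} \in \mathcal G$. Then $\mu$ at $\{b\}$'s level must be positive on $G$, while $G$'s own level is finite on $G$, which violates (ii). Putting deeper layers first, and assigning $+\infty$ off each layer, is what resolves this. The step I would verify most carefully is therefore the dichotomy in (b) and (c), which rests on the depth-uniqueness argument above.
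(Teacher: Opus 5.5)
Your proof is correct, and it differs from the paper mainly because the paper does not prove this statement. It cites Theorem~1 of R\'enyi (1956), stated after fixing a general measurable space. Later, in the Appendix, it asserts without proof that $\Gamma$ may be taken finite because $\mathcal G$ is finite.

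You instead give a self-contained argument for finite $\Omega$. That is the only setting in which the paper defines CPSs and applies the theorem. Your construction sets $\Omega_1=\bigcup\mathcal G$ and takes $\Omega_{k+1}$ to be the union of the $p_{\Omega_k}$-null members of $\mathcal G$ inside $\Omega_k$. It uses only closure under finite unions and concentration, and the chain rule enters only in the final ratio identity.

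Your output is also more explicit than what the citation provides. You get a finite index set $\{1,\dots,L\}$, so the Appendix's ``without loss of generality'' comes for free. Your layers are nested and are themselves members of $\mathcal G$, and each $\mu_j$ equals $p_{\Omega_j}$ on $\Omega_j$ and $+\infty$ off it. Your family even satisfies condition (ii) for every event $A\subseteq\Omega$, not only for members of $\mathcal G$. If $j\prec i$ and $\mu_j(A)<+\infty$, then $A\subseteq\Omega_j\subseteq\Omega_{i+1}$, so $\mu_i(A)=0$.

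What the citation buys is generality. With infinitely many conditioning events, $\bigcup\mathcal G$, or the union of all null members of some layer, need not belong to $\mathcal G$ or be null. Moreover, no finite chain of levels need exist. For example, take $\Omega=\mathbb N$, $\mathcal G$ the finite nonempty sets, and $p_G=\delta_{\min G}$. This is a union-closed CPS, yet the singletons must all sit at distinct levels. R\'enyi's general argument handles this situation, which is why the paper points to it in Section~5.4 for the infinite case. Your proof establishes exactly the finite case the paper relies on, but not the theorem at the generality at which Section~4 states it.
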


We show in the Appendix how to use this representation to construct an extension of a CPS to its $1$-augmented conditioning family.

\begin{theorem} \label{thm:big}
Let $(\mathcal G, p)$ be a CPS and let $\widehat{\mathcal G}$ be the $1$-augmentation of $\mathcal G$. Then:
\begin{enumerate} [label=(\roman*)]
\item there is a CPS $(\widehat{\mathcal G}, \widehat p)$ that extends $(\mathcal G, p)$, that is, $\widehat{p}_G = p_G$ for every $G \in \mathcal G$;
\item the iterated $1$-augmentation operation stabilizes after finitely many steps to produce a $1$-closed CPS.
\end{enumerate}
\end{theorem}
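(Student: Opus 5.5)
The plan is to reduce part (i) to one extension step: first extend $p$ from $\mathcal G$ to a CPS on \emph{all} nonempty subsets of $\Omega$. Restricting that full CPS to $\widehat{\mathcal G}$ then gives the required $\widehat p$, because concentration and the chain rule are inherited by any subfamily of conditioning events.

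\textbf{Step 1 (representation).} Since $\mathcal G$ is closed under unions, Theorem \ref{thm:renyi} gives a dimensionally ordered family $\{\mu_\gamma\}_{\gamma \in \Gamma}$ generating $p$ via Equation \ref{eq:dgen-representation}. Because $\Omega$ is finite, I would convert this into a finite lexicographic list of probability measures with disjoint supports, as follows.
\begin{itemize}
\item Only finitely many distinct normalized conditionals $p_G$ occur, so only finitely many indices $\gamma(G)$ matter; I discard the others.
\item For each remaining index $\gamma$, the points of finite positive $\mu_\gamma$-mass form a set $S_\gamma$. By condition (ii) of Definition \ref{def:dgen}, a set $G$ with $\gamma(G)=\gamma$ receives zero mass from later relevant indices on the finite part. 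I then remove from later $S_\beta$ the points already used at earlier indices, so the supports become disjoint.
\item Finally I append uniform measures on any points of $\Omega$ not yet charged.
\end{itemize}
The result is a sequence $\nu_1, \dots, \nu_n$ of probability measures with disjoint supports covering $\Omega$.

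\textbf{Step 2 (full CPS).} For every nonempty $H$, let $k(H)$ be the first index with $\nu_k(H) > 0$, and set
\[
\widehat p_H(\cdot) = \frac{\nu_{k(H)}(\cdot \cap H)}{\nu_{k(H)}(H)}.
\]
Concentration is immediate. For the chain rule with $E \subseteq F \subseteq G$, there are two cases.
\begin{itemize}
\item If $k(F) = k(G)$, the identity $\widehat p_G(E) = \widehat p_G(F)\,\widehat p_F(E)$ is ordinary conditioning.
\item If $k(F) > k(G)$, then $\nu_{k(G)}(F) = 0$, so both sides are $0$.
\end{itemize}
Then I must check $\widehat p_G = p_G$ for $G \in \mathcal G$, i.e., that $k(G)$ corresponds to $\gamma(G)$ and that the disjointification did not remove mass of $G$ at that level. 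This check is the main obstacle: it requires care with infinite-mass points of the Rényi measures, which are exactly what condition (ii) of Definition \ref{def:dgen} controls.

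If this verification gets messy, my fallback is more direct. I would apply the same first-positive-level construction straight to the finite parts of the $\mu_\gamma$, ordered by $\prec$, and verify agreement on $\mathcal G$ from conditions (i)--(ii) of Definition \ref{def:dgen}.

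\textbf{Step 3 (part (ii)).} Define $\mathcal G_0 = \mathcal G$ and let $\mathcal G_{t+1}$ be the $1$-augmentation of $\mathcal G_t$, computed with the CPS $p^{(t)}$ obtained from part (i). By concentration we have $\mathcal G_t \subseteq \mathcal E_t \subseteq \mathcal G_{t+1}$, so the chain $\mathcal G_0 \subseteq \mathcal G_1 \subseteq \cdots$ is increasing. It lies inside the finite set $2^\Omega$, so it stabilizes at some $\mathcal G_T = \mathcal G_{T+1}$, with compatible CPSs by part (i).

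At stabilization, $\mathcal E_T \subseteq \widehat{\mathcal G_T} = \mathcal G_T$, while $\mathcal G_T \subseteq \mathcal E_T$ always holds. Hence $\mathcal E_T = \mathcal G_T$, which is $1$-closedness in the sense of Definition \ref{def:1-closed}. Closure under unions and nonempty intersections and the covering property persist at every stage by the definition of $\widehat{\mathcal G}$, so the limiting CPS is admissible in our framework.

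If the extension is taken to be the restriction of the full CPS from Step 2, it can be chosen once at the start and reused at every stage. This makes the successive extensions automatically coherent with one another.
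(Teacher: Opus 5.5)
Your overall strategy is viable, and your chain-rule check and your part (ii) are fine. The strategy is to pass from R\'enyi's representation to a finite lexicographic system with disjoint supports, define a CPS on every nonempty event by the first-positive-level rule, and restrict to $\widehat{\mathcal G}$. But the step you defer as ``the main obstacle'' fails as you set it up, because the levels are oriented the wrong way.

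In Definition~\ref{def:dgen}, a set $G\in\mathcal G$ has measure $0$ at levels $\beta\succ\gamma(G)$ and measure $+\infty$ at levels $\alpha\prec\gamma(G)$. At those lower levels the finite part of $\mu_\alpha$ can still charge $G$. A first-positive-level rule needs $G$ to be uncharged at the levels listed \emph{before} $\gamma(G)$, so the levels must be listed in $\prec$-decreasing order. You list them in increasing order (``zero mass from later indices'', ``ordered by $\prec$'').

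A concrete failure: take $\Omega=\{a,b\}$, $\mathcal G=\{\{b\},\Omega\}$, $p_\Omega=\delta_a$, $p_{\{b\}}=\delta_b$. Dimensional ordering forces $\gamma(\{b\})\prec\gamma(\Omega)$, with $\mu_{\gamma(\{b\})}(\{a\})=+\infty$ and $0<\mu_{\gamma(\{b\})}(\{b\})<+\infty$. So your $\nu_1=\delta_b$, and your $\widehat p_\Omega=\delta_b\neq p_\Omega$.

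The disjointification has the same reversal. Suppose $\omega$ has finite positive mass at two relevant levels $\alpha\prec\beta$. Then every $G\ni\omega$ in $\mathcal G$ has $\mu_\beta(G)>0$, hence $\mu_\alpha(G)=+\infty$, so the $\alpha$-mass of $\omega$ is never used. Thus $\omega$ must be kept at its $\prec$-\emph{highest} such level, not the lowest. Your fallback inherits the same problem if ``ordered by $\prec$'' means increasing.

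The repair is short.
\begin{enumerate}[label=(\alph*)]
\item List the relevant levels in $\prec$-decreasing order.
\item Assign each point to the $\prec$-largest relevant level at which it has finite positive mass, and put the remaining points last.
\item Take $G\in\mathcal G$ with $\gamma(G)=\delta$ and $\omega\in G$. Condition (ii) gives $\mu_\epsilon(\{\omega\})\le\mu_\epsilon(G)=0$ for all $\epsilon\succ\delta$. So $\omega$ is assigned to $\delta$ if $\mu_\delta(\{\omega\})>0$, and strictly after $\delta$ otherwise.
\item Hence $G$ is uncharged before $\delta$ and charged at $\delta$ in proportion to $\mu_\delta(\cdot\cap G)$, so $\widehat p_G=p_G$.
\end{enumerate}

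So repaired, your route genuinely differs from the paper's. The paper builds a new dimensionally ordered family on the atoms of the algebra generated by $\widehat{\mathcal G}$. Each atom gets the active level of an ``existing witness'' (well defined via closure under intersections), plus a new bottom level $\gamma_0$, and the paper then re-checks R\'enyi's axioms with infinite masses.

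Your route gives a stronger conclusion, an extension to all nonempty events, with only a two-line chain-rule check. It needs union-closure only. As you note, one fixed extension also serves every stage of the iteration in part (ii).
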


Condition (ii) of this result tell us how, starting from conditioning families for Alice and Bob that may not be $1$-closed, we can obtain $1$-closed extensions as employed in our Agreement Theorem.  We illustrate this process by revisiting Example \ref{ex:disagree-cc}.
\vspace{0.025in}

\paragraph{Example 1 contd.}
\textit{Applying Equation \ref{eq:augment}, we find:}
\begin{align}
\mathcal E_A &= \bigl\{\{b\}, \{d\}, \{a, d\}, \{b, c\}, \{b, d\}, \{c, d\}, \{a,b,d\}, \{a,c, d\}, \{b, c, d\}, \Omega\bigr\}, \\
\mathcal E_B &= \bigl\{\{c\}, \{d\}, \{a, c\}, \{a, d\}, \{b, c\}, \{b, d\}, \{c, d\}, \{a, b, c\}, \{a, b, d\}, \{a, c, d\}, \{b, c, d\}, \Omega\bigr\},
\end{align}
\textit{so that the $1$-augmentations are:}
\begin{align}
\widehat{\mathcal G}_A &= \bigl\{\{b\}, \{c\}, \{d\}, \{a, d\}, \{b, c\}, \{b, d\}, \{c, d\}, \{a,b,d\}, \{a,c, d\}, \{b, c, d\}, \Omega\bigr\}, \\
\widehat{\mathcal G}_B &= 2^\Omega \backslash \emptyset. \label{eq:54}
\end{align}

\textit{In this example, a single augmentation already produces $1$-closed CPS's.  For Bob this is immediate from Equation \ref{eq:54}.  For Alice, the only nonempty events omitted from $\widehat{\mathcal G}_A$ are:}
\begin{equation}
\{a\}, \,\, \{a, b\}, \,\, \{a, c\}, \,\, \{a, b, c\}.
\end{equation}
\textit{Every conditioning event in $\widehat{\mathcal G}_A$ containing one of these sets also contains $d$.  Since
$p^A_\Omega = \delta_d$ and $\widehat p_A$ extends $p_A$, the chain rule implies that every such conditional probability is also concentrated on $d$.  Consequently, none of the four omitted events receives conditional probability $1$.  It follows that no additional events are added by a second augmentation operation.}

\textit{To rerun the common-certainty recursion, we need only certain values of the augmented CPSs.  Since
$\{a, d\}, \{b, c\}\in\mathcal G_A$, the extension preserves:}
\begin{equation}
\widehat p^A_{\{a,d\}} = \delta_d, \,\, \widehat p^A_{\{b, c\}} = \delta_b.
\end{equation}
\textit{Since $\{b\}, \{c\}, \{d\} \in \widehat{\mathcal G}_A$ are singletons or were added as probability-$1$ event, we also have:}
\begin{equation}
\widehat p^A_{\{b\}} = \delta_b, \,\, \widehat p^A_{\{c\}}
= \delta_c, \,\, \widehat p^A_{\{d\}} = \delta_d.
\end{equation}
\textit{For Bob, every singleton $\{\omega\} \in \widehat{\mathcal G}_B$ and therefore:}
\begin{equation}
\widehat p^B_{\{\omega\}} = \delta_\omega.
\end{equation}
\textit{The augmented atoms are therefore:}
\begin{align}
&\widehat m_A(a) = \{a, d\}, \,\, \widehat m_A(b) = \{b\}, \,\, \widehat m_A(c) = \{c\}, \,\, \widehat m_A(d) = \{d\}, \\
&\widehat m_B(a) = \{a\}, \,\, \widehat m_B(b) = \{b\}, \,\, \widehat m_B(c) = \{c\}, \,\, \widehat m_B(d) = \{d\}.
\end{align}

\textit{Repeating the common-certainty recursion gives:}
\begin{align}
\widehat A^0 = \{b\},& \,\,\,\, \widehat B^0 = \{a, c, d\}, \\
\widehat C_A(\widehat B^0) = \{a, c, d\},& \,\,\,\, \widehat C_B(\widehat A^0) = \{b\}, \\
\widehat A^1 = \widehat A^0 \cap \widehat C_A(\widehat B^0) = \emptyset,& \,\,\,\, \widehat B^1 = \widehat B^0 \cap \widehat C_B(\widehat A^0) = \emptyset.
\end{align}
\textit{We see that $C^\infty = \emptyset$ and the previous scenario of common certainty of disagreement does not arise.  Observe that local consistency is not preserved under $1$-augmentation.  The set $\{b, c\}$ is an element of both $\widehat{\cal G}_A$ and $\widehat{\cal G}_B$.  However, we find:}
\begin{equation}
\widehat p^A_{\{b, c\}} = \delta_b \not= \delta_c = \widehat p^B_{\{b, c\}}.
\end{equation}
\textit{Conceptually, this is not surprising.  The $1$-augmentation process uncovers a disagreement that was invisible under the previous coarser information structure.  An implication is that, to obtain our Agreement Theorem, local consistency must be imposed on the $1$-augmented structure, not before.}

\section{Discussion} \label{sec:5}
We make some observations on the certainty-knowledge relationship, provide a characterization of the certainty reflection condition, and comment on open directions.
\vspace{0.1in}

\noindent\textbf{5.1 Knowledge vs.~Certainty}  In the usual set-up, with a single (prior) probability measure rather than a CPS, it is well known that certainty is weaker than knowledge, defined as follows.

\begin{Def} \label{def:knowledge}
Fix an agent $i = A, B$, an event $E$, and a state $\omega$.  Say $i$ \textbf{knows} $E$ at $\omega$ if $m_i(\omega) \subseteq E$.
\end{Def}

This remains true in our set-up, even after imposing certainty reflection and $1$-closedness.  Let $\Omega = \{a, b\}$, $\mathcal G = \{\{a\}, \Omega\}$, and define a CPS by setting $p_\Omega = \delta_{\{a\}}$.  It is easily verified that $1$-closedness and certainty reflection are satisfied.  Consider the event $E = \{a\}$.  At state $b$, the agent is certain of $E$, but does not know $E$.

The gap is closed if we add the assumption that $E$ happens, that is, $\omega \in E$.  We will prove the stronger result that, in this case, common certainty implies common knowledge.  To proceed, write the event that $i$ knows $E$ as:
\begin{equation}
K_i(E) = \{ \omega \in \Omega : m_i(\omega) \subseteq E\}.
\end{equation}
Define $A^0$ and $B^0$ as in Equation \ref{eq-4} and define recursively:
\begin{align}
A^{K, n+1} &= A^{K, n} \cap K_A(B^{K, n}), \\
B^{K, n+1} &= B^{K, n} \cap K_B(A^{K, n}),
\end{align}
for $n \ge 0$.  Let:
\begin{equation}
K^\infty = \bigcap_{n=0}^\infty A^{K, n} \cap \bigcap_{n=0}^\infty B^{K, n}.
\end{equation}

\begin{Def} \label{def:common-knowledge}
Fix an event $E$ and a state $\omega$.  Say it is \textbf{common knowledge at} $\omega$ that Alice assigns probability $q_A$ to $E$ and Bob assigns probability $q_B$ to $E$ if $\omega \in K^\infty$.
\end{Def}

Paralleling certainty, in the special case $q_A = q_B  = 1$, we simply say that the event $E$ is common knowledge at $\omega$.  

\begin{theorem} \label{thm:common-certainty-to-common-knowledge}
Suppose the CPSs $(\mathcal G_A, p^A)$ and $(\mathcal G_B, p^B)$ satisfy certainty reflection and are $1$-closed.  Fix an event $E$ and suppose $E$ is common certainty at state $\omega$ and $\omega \in E$.  Then $E$ is common knowledge at $\omega$.
\end{theorem}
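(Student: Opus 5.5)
The plan is to show by induction on $n$ that the certainty recursion and the knowledge recursion, both started from the same $A^0$ and $B^0$ (with $q_A = q_B = 1$), agree on the relevant sets. Concretely, we aim to show $A^n = A^{K,n}$ and $B^n = B^{K,n}$ for every $n$. The identity then gives $C^\infty = K^\infty$, so $\omega \in C^\infty$ yields $\omega \in K^\infty$. If full equality fails, we fall back on the weaker claim that $\omega$ lies in every $A^{K,n}$ and $B^{K,n}$. The hypothesis $\omega \in E$ is used only at the first step, where certainty of $E$ has to be converted into knowledge.

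The key tool is Lemma \ref{lem:saturate-2}, which says every $A^n$ is $A$-saturated and every $B^n$ is $B$-saturated, together with Lemma \ref{lem:certainty-knowledge}. The inductive step works as follows.

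\begin{enumerate}[label=(\roman*)]
\item Assume $A^n = A^{K,n}$ and $B^n = B^{K,n}$. Then $A^{n+1} = A^n \cap C_A(B^n)$ and $A^{K,n+1} = A^n \cap K_A(B^n)$.
\item Since $m_A(x)$ carries $p^A_{m_A(x)}$-probability $1$, we have $K_A(F) \subseteq C_A(F)$ for any $F$. This gives $A^{K,n+1} \subseteq A^{n+1}$.
\item For the reverse inclusion, take $x \in A^n \cap C_A(B^n)$. If $x \in B^n$, Lemma \ref{lem:certainty-knowledge} gives $m_A(x) \subseteq B^n$, so $x \in K_A(B^n)$. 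The symmetric argument handles Bob.
\end{enumerate}

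The main obstacle is step (iii): Lemma \ref{lem:certainty-knowledge} needs $x \in B^n$, but a point of $A^{n+1}$ need not lie in $B^n$. There are two ways around this.

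\textbf{Restrict to $C^\infty$.} Work only with points of $C^\infty$, which lie in all $A^n$ and $B^n$. Lemma \ref{lem:saturate-2} shows $C^\infty$ is $A$- and $B$-saturated, and inside its proof it shows $m_A(x) \subseteq B^n$ for every $x \in C^\infty$. We then prove by induction that $C^\infty \subseteq A^{K,n} \cap B^{K,n}$ for all $n$:
\begin{itemize}
\item For $n=0$, the sets $A^{K,0}$ and $A^0$ coincide by definition.
\item For the step, a point $x \in C^\infty$ lies in $A^{K,n}$ by the induction hypothesis. Also $m_A(x) \subseteq C^\infty \subseteq B^{K,n}$, using saturation and the induction hypothesis. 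Hence $x \in K_A(B^{K,n})$, and similarly for Bob.
\end{itemize}
This gives $\omega \in C^\infty \subseteq K^\infty$. This route seems cleanest, and it avoids needing equality of the recursions everywhere.

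\textbf{Where $\omega \in E$ enters.} The event $E$ being common certainty means $A^0 = B^0 = C_A(E)$-type sets, namely $\{x : p^A_{m_A(x)}(E) = 1\}$. The theorem's conclusion in the "$E$ is common knowledge" reading presumably also requires $m_i(\omega) \subseteq E$ at the base. For that, $\omega \in E$ and $p^A_{m_A(\omega)}(E) = 1$ give $m_A(\omega) \subseteq E$ via Lemma \ref{lem:certainty-knowledge}, and likewise for Bob. Extending this to all of $C^\infty$ requires points $y \in C^\infty$ to lie in $E$. I would get this by applying Lemma \ref{lem:certainty-knowledge} at $\omega$ and propagating along atoms using saturation. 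I would check carefully whether this propagation goes through or whether the statement only needs knowledge of $E$ at $\omega$ itself.
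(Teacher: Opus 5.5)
There is a genuine gap, and it sits where you hesitated. Your ``cleanest route'' rests on the base claim that $A^{K,0}$ and $A^0$ coincide by definition. For the statement ``$E$ is common knowledge'' (the case $q_A=q_B=1$), the first level of the knowledge recursion must be knowledge of $E$. That is, $A^{K,0}=K_A(E)=\{x : m_A(x)\subseteq E\}$ and $B^{K,0}=K_B(E)$. This is how the paper's proof reads it: it invokes Lemma \ref{lem:certainty-knowledge} to get $A^0\cap E\subseteq A^{K,0}$.

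With that base, your intermediate claim $C^\infty\subseteq K^\infty$ is false. Take the example just before the theorem: $\Omega=\{a,b\}$, $\mathcal G=\{\{a\},\Omega\}$, $p_\Omega=\delta_a$, both agents given this CPS, and $E=\{a\}$. Then $A^0=B^0=\Omega$, so $C^\infty=\Omega$, yet $b\notin K_A(E)=\{a\}$. If you instead read $A^{K,0}=A^0$ literally, your induction is valid. But it then proves $C^\infty=K^\infty$ with the hypothesis $\omega\in E$ playing no role, and it makes $E$ ``common knowledge'' at $b$ although the agent does not know $E$ there. That is a sign this is not the intended notion. The same example defeats your fallback of showing every $y\in C^\infty$ lies in $E$: since $b\in C^\infty\setminus E$, no propagation argument can cover all of $C^\infty$.

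The missing idea is to run the induction on the smaller set $J=C^\infty\cap E$. First show $J$ is saturated for both agents:
\begin{itemize}
\item Take $x\in J$. Then $x\in A^0$ and $x\in E$, so Lemma \ref{lem:certainty-knowledge} gives $m_A(x)\subseteq E$.
\item Lemma \ref{lem:saturate-2} gives $m_A(x)\subseteq C^\infty$.
\item Hence $m_A(x)\subseteq J$, and symmetrically for $B$, so $J$ is both $A$- and $B$-saturated.
\end{itemize}
The base step is then $J\subseteq A^0\cap E\subseteq K_A(E)=A^{K,0}$, again by Lemma \ref{lem:certainty-knowledge}. Your inductive step goes through verbatim with $J$ in place of $C^\infty$; it is the paper's inductive step, applied to the right set. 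This yields $\omega\in J\subseteq K^\infty$.

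Your idea of propagating from $\omega$ along atoms would also work if you aimed it only at the smallest $A$- and $B$-saturated set containing $\omega$, which lies inside $J$. Establishing that, however, is exactly the saturation argument for $J$ above, not a statement about all of $C^\infty$.
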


\begin{proof}
Set $q_A = q_B = 1$ and let $J = C^\infty \cap E$.  We will prove by induction that $J \subseteq K^\infty$.  We first show that $J$ is $A$-saturated and $B$-saturated.  Fix $x \in J$, so that $x \in C^\infty$ and $x\in E$.  Since $C^\infty \subseteq A^0$, we have $x \in A^0$.  That is, $p^A_{m_A(x)}(E) = 1$, which, using $x \in E$ and Lemma \ref{lem:certainty-knowledge}, yields $m_A(x) \subseteq E$.  By Lemma \ref{lem:saturate-2}, the set $C^\infty$ is $A$-saturated.  Putting this together, we obtain:
\begin{equation}
m_A(x) \subseteq C^\infty \cap E = J,
\end{equation}
and therefore $J$ is $A$-saturated.  A parallel argument shows that $J$ is $B$-saturated.

We next show that $J \subseteq A^{K, n} \cap B^{K, n}$.  Lemma \ref{lem:certainty-knowledge} immediately yields $A^0 \cap E \subseteq A^{K, 0}$.  The parallel argument establishes that $B^0 \cap E \subseteq B^{K, 0}$.  Since $J \subseteq A^0 \cap B^0 \cap E$, this establishes the base step.  Now, assume inductively that $J \subseteq A^{K, n} \cap B^{K, n}$.  Fix $x \in J$.  Since $J$ is $A$-saturated, we have:
\begin{equation}
m_A(x) \subseteq J \subseteq B^{K, n},
\end{equation}
using the induction hypothesis.  This says that $x \in K_A(B^{K, n})$.  Again using the induction hypothesis, we now find:
\begin{equation}
x \in A^{K, n} \cap K_A(B^{K, n}) = A^{K, n+1}.
\end{equation}
The parallel argument shows that $x \in B^{K, n+1}$, completing the induction step.
\end{proof}

The converse inclusion $K^\infty \subseteq C^\infty$ follows immediately from the concentration property of CPSs alone.  This is a standard relationship.
\vspace{0.1in}

\noindent\textbf{5.2 Knowledge-Based Agreement}  
While our focus has been on certainty, our methods can also be used to obtain a knowledge-based Agreement Theorem for settings where agents do not have information partitions.  Geanakoplos (1989, 2021) and Samet (1990, 2022) are leading treatments of this case, relative to which our main innovation is again the CPS machinery.  The CPS-based theorem statement is: Suppose the CPSs $(\mathcal G_A, p^A)$ and $(\mathcal G_B, p^B)$ satisfy local consistency.  If, at a state $\omega$, it is common knowledge that Alice assigns probability $q_A$ to $E$ and Bob assigns probability $q_B$ to $E$, then $q_A = q_B$.  The first step in the proof is to show that $m(\omega) \subseteq K^\infty$.   (This is not novel.)  The next step is to use $K^\infty \subseteq C^\infty$ to obtain our Lemma \ref{lem:contain}.  From here, we piggyback on Lemmas \ref{lem:ordered} and \ref{lem:average-atoms} to obtain the result.
\vspace{0.1in}

\noindent\textbf{5.3 Certainty Reflection}
If the atoms $m_i(\omega)$ of a family $\mathcal G_i$ of conditioning events form a partition, it is immediate that certainty reflection (Definition \ref{def:reflection}) is satisfied.  The next result shows how, under certainty reflection, the departure from an atom-partition is controlled.  Under our closure conditions, the family of atoms fails to be a partition only if there are distinct states for which one atom is strictly contained in another atom.  We show that probability given the larger atom concentrates on the smaller atom.  In this sense, we obtain a partition up to null events.  In the theorem and proof we now give, the agent subscript is dropped for simplicity.

\begin{theorem} \label{thm:reflection}
Under $1$-closedness, certainty reflection holds if and only if, for every atom $m$ and state $\omega$, if $m \subsetneq m(\omega)$, then $p_{m(\omega)}(m) = 1$.
\end{theorem}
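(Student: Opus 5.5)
The plan is to prove the two directions separately. The forward direction tests certainty reflection on the event $m$ itself and then uses Lemma \ref{lem:certainty-knowledge}. The reverse direction uses the chain rule to show that every atom inside $m(\omega)$ inherits the same conditional probability of $E$. Throughout, for $x \in m(\omega)$ we have $m(x) \subseteq m(\omega)$, because $m(\omega)$ is a member of $\mathcal G$ containing $x$ and $m(x)$ is the smallest such member.

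\emph{($\Rightarrow$)} Assume certainty reflection and $1$-closedness, and let $m = m(\omega')$ be an atom with $m \subsetneq m(\omega)$. Set $q = p_{m(\omega)}(m)$ and
\[
S = \{x \in \Omega : p_{m(x)}(m) = q\}.
\]
We have $\omega \in S$ trivially. Certainty reflection applied to the event $m$ at $\omega$ gives $p_{m(\omega)}(S) = 1$. Since $\omega \in S$ and the CPS is $1$-closed, Lemma \ref{lem:certainty-knowledge} yields $m(\omega) \subseteq S$. Now $\omega' \in m \subseteq m(\omega)$, so $\omega' \in S$, that is, $p_{m}(m) = q$. By concentration, $p_m(m) = 1$, so $q = p_{m(\omega)}(m) = 1$, as required. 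The step needing care is recognizing that the right event to test reflection on is $m$ itself. The rest is just Lemma \ref{lem:certainty-knowledge}.

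\emph{($\Leftarrow$)} Assume the atom condition. Fix $E$, $q$, and $\omega$ with $p_{m(\omega)}(E) = q$, and let
\[
S = \{x : p_{m(x)}(E) = q\}.
\]
I will show $m(\omega) \subseteq S$; concentration then gives $p_{m(\omega)}(S) = 1$.

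Take $x \in m(\omega)$, so $m(x) \subseteq m(\omega)$. If $m(x) = m(\omega)$, then $x \in S$ immediately. Otherwise $m(x) \subsetneq m(\omega)$, and the hypothesis gives $p_{m(\omega)}(m(x)) = 1$. The chain rule (with $E \cap m(x) \subseteq m(x) \subseteq m(\omega)$) gives
\[
p_{m(\omega)}(E \cap m(x)) = p_{m(\omega)}(m(x))\, p_{m(x)}(E) = p_{m(x)}(E).
\]
Since $m(x)$ has full $p_{m(\omega)}$-measure, $E \setminus m(x)$ is $p_{m(\omega)}$-null. Hence $p_{m(\omega)}(E \cap m(x)) = p_{m(\omega)}(E) = q$. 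So $p_{m(x)}(E) = q$ and $x \in S$.

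This direction does not even use $1$-closedness.
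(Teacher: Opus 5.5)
Your proof is correct and matches the paper's argument essentially step for step. In the forward direction you test certainty reflection on the event $m$ and use Lemma~\ref{lem:certainty-knowledge} to get $\omega' \in m(\omega) \subseteq S$; the paper runs this as a contradiction from $q \neq 1$, while you conclude $q = p_m(m) = 1$ directly. The converse is the same chain-rule computation as in the paper, and your remark that it does not use $1$-closedness is accurate.
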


\begin{proof}
Suppose certainty reflection holds and let $m \subsetneq m(\omega)$.  Choose $\omega^\prime$ such that $m(\omega^\prime) = m$.  Let $E = m$ and $q = p_{m(\omega)}(E)$, and set:
\begin{equation}
F = \{\omega^{\prime\prime} \in \Omega : p_{m(\omega^{\prime\prime})}(E) = q\}.
\end{equation}
By certainty reflection, we have $p_{m(\omega)}(F) = 1$.  By concentration:
\begin{equation}
p_{m(\omega^\prime)}(E) = p_m(m) = 1.
\end{equation}
It follows that if $q \not= 1$, then $\omega^\prime \notin F$.  Since $p_{m(\omega)}(E) = q$, we have $\omega \in F$.  Therefore, by $p_{m(\omega)}(F) = 1$, $1$-closedness, and Lemma \ref{lem:certainty-knowledge}, we obtain $m(\omega)\subseteq F$.  This contradicts $\omega^\prime \notin F$.  Therefore, $q = 1$ and $p_{m(\omega)}(E) = p_{m(\omega)}(m) = 1$.

For the converse, fix an event $E$, a state $\omega^\prime$, and let $q = p_{m(\omega^\prime)}(E)$.  We need to show that $p_{m(\omega^\prime)}(F) = 1$.  Fix $\omega^{\prime\prime} \in m(\omega^\prime)$.  Then $m(\omega^{\prime\prime}) \subseteq m(\omega^\prime)$.  If there is equality, then:
\begin{equation}
p_{m(\omega^{\prime\prime})}(E) = p_{m(\omega^\prime)}(E) = q,
\end{equation}
and so $\omega^{\prime\prime} \in F$.  If instead $m(\omega^{\prime\prime}) \subsetneq m(\omega^\prime)$, then by assumption, we have $p_{m(\omega^\prime)}(m(\omega^{\prime\prime})) = 1$.  By the chain rule:
\begin{equation}
p_{m(\omega^\prime)}(E \cap m(\omega^{\prime\prime})) = p_{m(\omega^\prime)}(m(\omega^{\prime\prime})) \times p_{m(\omega^{\prime\prime})}(E) = p_{m(\omega^{\prime\prime})}(E).
\end{equation}
We also have:
\begin{equation}
p_{m(\omega^\prime)}(E \cap m(\omega^{\prime\prime})) = p_{m(\omega^\prime)}(E) = q.
\end{equation}
Therefore, $p_{m(\omega^{\prime\prime})}(E) = q$, from which $\omega^{\prime\prime} \in F$.  We have now shown that $m(\omega^\prime) \subseteq F$.  By concentration, it follows that $p_{m(\omega^\prime)}(F) = 1$, establishing certainty reflection.
\end{proof}

Certainty reflection is used in two places in the paper.  First, it is one of the hypotheses of our Agreement Theorem, appearing as a condition for Lemma \ref{lem:saturate-2}, which establishes saturation.  We do not know if our Agreement Theorem remains true or fails without this condition.  Next, certainty reflection is a hypothesis of Theorem \ref{thm:common-certainty-to-common-knowledge} above on when (common) certainty implies (common) knowledge.  We do not know if this result holds without certainty reflection.
\vspace{0.1in}

\noindent\textbf{5.4 The Infinite Case}  
Another question concerns the extension of our results beyond the finite case.  Nielsen (1984) and Samet (1990, 1992) pioneered the extension of the Agreement Theorem to the general measurable case.  In our setting, there are some existing pointers to an extension to the infinite setting.  The R\'enyi (1956) representation theorem holds in the general measurable case.  (Our review of it in Section \ref{sec:4} was at this level of generality.)  Kami\'nski (1985) studies various extensions of CPSs on general spaces and shows that they may need to be iterated transfinitely before stabilizing.  If topological structure is required, the methods in Doria (2025) and Doria and Yang (2025) yield CPSs defined on separable metric spaces, which may prove useful.  Basic to all ingredients of our Agreement Theorem, the finite setting allows us to operate directly on minimal conditioning events (atoms).  This strategy will need to be reworked in the infinite case.
\vspace{0.05in}

\noindent\textbf{5.5 Agreeing to Agree} Lehrer and Samet (2011) and Xiong (2012) study the question of when common knowledge of agreement is possible (i.e., when agents can ``agree to agree").  These papers treat the cases of finite, countable, and uncountable state spaces.  A CPS-based approach to this question might be insightful.

\renewcommand{\thesection}{A.1}
\setcounter{equation}{0}
\renewcommand\theequation{A.\arabic{equation}}
\setcounter{theorem}{0}
\renewcommand\thetheorem{A.\arabic{theorem}}
\setcounter{lemma}{0}
\renewcommand\thelemma{A.\arabic{lemma}}
\setcounter{Def}{0}
\renewcommand\thedefinition{A.\arabic{Def}}

\section*{Appendix} \label{app-a}
We prove Theorem \ref{thm:big} of Section \ref{sec:4}.  By Theorem \ref{thm:renyi}, there is a totally ordered set $\Gamma$ such that Equation \ref{eq:dgen-representation} holds.  Since $\mathcal G$ is finite, we may take $\Gamma$ to be finite without loss of generality.  Given $K \in \mathcal G$, we will refer to the index $\gamma(K)$ (recall that $0 < \gamma(K) <\infty$) as the active level of $K$.  

\begin{proof}[Proof of Theorem 3(i)]
Let $\mathcal A$ be the finite algebra generated by $\widehat{\mathcal G}$, and let $H_1, \dots, H_m$ be its atoms.  By construction, the atoms partition $\Omega$ and every member of $\widehat{\mathcal G}$ is a union of some of them.  Given an atom $H_i$, call $K \in \mathcal G$ an existing witness for $H_i$ if $H_i \subseteq K$ and $\mu_{\gamma(K)}(H_i) > 0$.  We first show that if $K, L\in \mathcal G$ are existing witnesses for a given atom $H_i$, then $\gamma(K) = \gamma(L)$.  Indeed, we have $H_i \subseteq K \cap L$, so $K \cap L \not= \emptyset$ from which $K \cap L \in \mathcal G$.  Also:
\begin{equation}
0< \mu_{\gamma(K)}(H_i) \le \mu_{\gamma(K)}(K \cap L) \le \mu_{\gamma(K)}(K) < +\infty.
\end{equation}
Therefore the active level of $K \cap L$ is $\gamma(K)$.  By the same argument it is also $\gamma(L)$, so that $\gamma(K) = \gamma(L)$.  This establishes that the active level of an existing witness depends only on the atom.

Next, we add an index $\gamma_0$ with $\gamma_0 \prec \gamma$ for all $\gamma \in \Gamma$, and set $\widehat{\Gamma} = \Gamma \cup \{\gamma_0\}$.  Let:
\begin{equation}
d_i =
\begin{cases}
\gamma(K) &\text{if} \,\, H_i \,\, \text{has an existing witness} \,\, K \in \mathcal G, \\
\gamma_0 &\text{otherwise}.
\end{cases}
\end{equation}
If $d_i \in \Gamma$, define the measure:
\begin{equation}
\lambda_i(E) = \mu_{d_i}(E \cap H_i),
\end{equation}
while, if $d_i = \gamma_0$, choose any probability measure $q_i$ with full support on $H_i$ and define:
\begin{equation}
\lambda_i(E) = q_i(E \cap H_i).
\end{equation}
Also, given an event $E$, let:
\begin{equation}
\nu_i(E) =
\begin{cases}
+\infty &\text{if} \,\, E \cap H_i \not= \emptyset, \\
0 &\text{otherwise}.
\end{cases}
\end{equation}

Now, for each $\eta \in \widehat{\Gamma}$, define a measure:
\begin{equation}
\widehat \mu_\eta(E) = \sum_{\{i \, : \, d_i = \eta\}} \lambda_i(E) + \sum_{\{i \, : \, d_i \succ \eta\}} \nu_i(E).
\end{equation}
Fix again $K \in \widehat{\mathcal G}$.  Since $K$ is a nonempty union of atoms, the set $\{d_i : H_i \subseteq K\}$ is nonempty and has a maximum, which we write as $\eta(K)$.  If $\alpha \prec \eta(K)$, choose $j$ with $H_j \subseteq K$ and $d_j = \eta(K)$.  Then $d_j \succ \alpha$, and so $\nu_j(K)$ appears in $\widehat \mu_\alpha(K)$.  Since $H_j \subseteq K$, we obtain $\nu_j(K ) = +\infty$.  Therefore, $\widehat{\mu}_\alpha(K) = +\infty$.

At the active level $\eta(K)$, no atom contained in $K$ has dimension strictly greater than $\eta(K)$, so that all $\nu$-terms vanish on $K$.  At the same time, there is a $j$ with $H_j \subseteq K$ and $d_j = \eta(K)$ such that $0 < \lambda_j(H_j) < +\infty$.  If $d_j \in \Gamma$, this follows from the definition of $d_j$ via an existing witness.  If $d_j = \gamma_0$, this follows from $\lambda_j(H_j) = q_j(H_j) = 1$.  Since only finitely many atoms are involved, we have shown that $0 < \widehat \mu_{\eta(K)}(K) < +\infty$.

Finally, if $\alpha \succ \eta(K)$, then no atom contained in $K$ has dimension equal to or greater than $\alpha$.  Since $K$ is a union of atoms, every atom not contained in $K$ is disjoint from $K$, from which every term in the definition of $\widehat \mu_\alpha(K)$ vanishes.  Therefore, $\widehat \mu_\alpha(K) = 0$.  We have now established that $\{\widehat \mu_\eta\}_{\eta \in \widehat \Gamma}$ is dimensionally ordered relative to $\widehat {\mathcal G}$.

It remains to show that $(\widehat{\mathcal G}, \widehat p)$ extends $(\mathcal G, p)$:  for $K \in \mathcal G$, we have $\eta(K) = \gamma(K)$ and, for any event $E$:
\begin{equation}
\widehat \mu_{\gamma(K)}(E \cap K) = \mu_{\gamma(K)}(E \cap K).
\end{equation}
Fix $K \in \mathcal G$ and write $\gamma = \gamma(K)$.  Since $0 < \mu_\gamma(K) < +\infty$ and $K$ is a finite union of the atoms it contains, there is an atom $H_j \subseteq K$ with $\mu_\gamma(H_j )> 0$.  Then $K$ is an existing witness for $H_j$, and so $d_j = \gamma$.  Therefore $\eta(K) \succeq \gamma$.

Now suppose there is an atom $H_i \subseteq K$ with $d_i = \delta \succ \gamma$.  By definition of $d_i$, there is a set $L \in \mathcal G$ such that $H_i \subseteq L$ and $\mu_\delta(H_i) > 0$.  Let $J = K \cap L$.  Then $J \in \mathcal G$, $J \not= \emptyset$, and:
\begin{equation}
0 < \mu_\delta(H_i) \le \mu_\delta(J) \le \mu_\delta(L) < +\infty,
\end{equation}
so that the active level of $J$ is $\delta$.  But $J \subseteq K$,  $0 < \mu_\gamma(K) < +\infty$, and $\gamma \prec \delta$.  By dimensional ordering applied to the set $K$, this implies $\mu_\delta(K) = 0$, which contradicts $H_i \subseteq K$ and $\mu_\delta(H_i) > 0$.  Therefore, no atom inside $K$ has dimension greater than $\gamma$.  It follows that $\eta(K) \preceq \gamma$ and therefore $\eta(K) = \gamma$.

Next, if $H_i \subseteq K$ and $d_i \not= \gamma$, then $\mu_\gamma(H_i) = 0$.  If not, then the set $K$ itself would be an existing witness for $H_i$.  This would imply $d_i = \gamma$, which is a contradiction.  Therefore the atoms inside $K$ with nonzero weight under $\mu_\gamma$ are exactly those with $d_i = \gamma$.  Since the atoms inside $K$ partition $K$, we get:
\begin{equation}
\mu_\gamma(E \cap K) = \sum_{\{i : H_i \subseteq K\}} \mu_\gamma(E \cap H_i) = \sum_{\{i : H_i \subseteq K \,\, \text{and} \,\, d_i = \gamma\}} \mu_\gamma(E \cap H_i) = \widehat \mu_\gamma(E \cap K).
\end{equation}
Setting $E = \Omega$ gives $\widehat \mu_\gamma(K)=\mu_\gamma(K)$.

We can now calculate, for $K \in \mathcal G$ and an event $E$:
\begin{equation}
\widehat p_K(E) = \dfrac{\widehat \mu_{\eta(K)}(E \cap K)}{\widehat \mu_{\eta(K)}(K)} = \dfrac{\widehat \mu_{\gamma(K)}(E\cap K)} {\widehat \mu_{\gamma(K)}(K)} = \dfrac{\mu_{\gamma(K)}(E \cap K)}{\mu_{\gamma(K)}(K)} =
p_K(E),
\end{equation}
establishing that $(\widehat{\mathcal G}, \widehat p)$ extends $(\mathcal G, p)$.
\end{proof}

\begin{proof}[Proof of Theorem 3(ii)]
Set $(\mathcal G^0, p^0) = (\mathcal G, p)$ and, for $n \ge 0$, define:
\begin{equation}
\mathcal E^n = \bigl\{ E \subseteq \Omega \, : \, p^n_G(E) = 1 \,\, \text{for some} \,\, G \in \mathcal G^n \,\, \text{with} \,\, E \subseteq G \bigr\}.
\end{equation}
Let $\mathcal G^{n+1}$ be the $1$-augmentation of $\mathcal G^n$, that is the smallest family containing $\mathcal E^n$ that covers $\Omega$ and is closed under unions and nonempty intersections.  By part (i), there is a CPS $(\mathcal G^{n+1}, p^{n+1})$ extending $(\mathcal G^n, p^n)$.  Concentration implies $\mathcal G^n \subseteq \mathcal E^n$, so that $\mathcal G^n \subseteq \mathcal G^{n+1}$ for all $n$.  If $(\mathcal G^n, p^n)$ is not $1$-closed, then $\mathcal E^n \not= \mathcal G^n$.  Since $\mathcal G^n \subseteq \mathcal E^n \subseteq \mathcal G^{n+1}$, it follows that $\mathcal G^n \subsetneq \mathcal G^{n+1}$.  Since $\Omega$ is finite, there is a finite $N$ with $\mathcal G^{N+1} = \mathcal G^N$, from which $\mathcal E^N = \mathcal G^N$.  It follows that $(\mathcal G^N, p^N)$ is $1$-closed, as required.
\end{proof}

\section*{Declaration of Generative AI and AI-Assisted Technologies in the Writing Process}

 ChatGPT and Grok were used to help find proof tactics and identify references.  After using these tools, the authors reviewed and edited the content as needed and take full responsibility for the content of the published article.

\end{document}